\newtheorem{theorem}{Theorem}[section]
\newtheorem{lemma}[theorem]{Lemma}
\newtheorem*{thm*}{Theorem}
\newtheorem{definition}{Definition}[section]
\newtheorem{remark}{Remark}[section]
\newcommand{\Zs}{Z^{N,\gamma}}
\newcommand{\Zt}{Z^{N,-\theta_0}}
\theoremstyle{definition}
\newtheorem{example}{Example}[section]
\newcommand{\mybox}[1]{%
  \setbox0=\hbox{#1}%
  \setlength{\@tempdima}{\dimexpr\wd0+13pt}%
  \begin{tcolorbox}[colframe=mycolor,boxrule=0.5pt,arc=4pt,
      left=6pt,right=6pt,top=6pt,bottom=6pt,boxsep=0pt,width=\@tempdima]
    #1
  \end{tcolorbox}
}
\newcommand{\blue}{\textcolor{black}}
\newcommand{\dlim}{\displaystyle \lim\limits}
\newcommand{\dsum}{\displaystyle \sum\limits}
\newcommand{\dint}{\displaystyle \int}
\def\S{\mathcal S}
\def\C{\mathcal C}
\def\Re{\mathcal R}
\def\R{\mathbb R}
\def\Z{\mathbb Z}
\def\K{\mathcal K}
\def\Td1{T^{D,1}_{\{x_n\}}}
\def\Ts1{T^{S,1}_{\{x_n\}}}
\def\K{\mathcal{K}}
\def\Tid1{T^{D,1}_{\{x_n+\zeta_i\}}}
\def\Tis1{T^{S,1}_{\{x_n+\zeta_i\}}}
\def\Tjs1{T^{S,1}_{\{x_n+\zeta_j\}}}
\def\Tjd1{T^{D,1}_{\{x_n+\zeta_j\}}}
\newcommand*\colvec[1]{
        \global\colveccount#1
        \begin{pmatrix}
        \colvecnext
}
\def\colvecnext#1{
        #1
        \global\advance\colveccount-1
        \ifnum\colveccount>0
                \\
                \expandafter\colvecnext
        \else
                \end{pmatrix}
        \fi
}
\title{Accuracy of Multiscale Reduction for Stochastic Reaction Systems}
\author{
German Enciso\thanks{Department of Mathematics, University of
  California, Irvine, USA.  enciso@uci.edu},
\and
Jinsu Kim\thanks{Department of Mathematics, University of
  California, Irvine, USA.  jinsu.kim@uci.edu}
  }
\begin{document}

\maketitle

\begin{abstract}
\noindent 
Stochastic models of chemical reaction networks are an important tool to describe and analyze noise effects in cell biology. When chemical species and reaction rates in a reaction system have different orders of magnitude, the associated stochastic system is often modeled in a multiscale regime. 
It is known that multiscale models can be approximated with a reduced system such as mean field dynamics or hybrid systems, but the accuracy of the approximation remains unknown.
In this paper, we estimate the probability distribution of low copy species in multiscale stochastic reaction systems under short-time scale.  We also establish an error bound for this approximation.  Throughout the manuscript, typical mass action systems are mainly handled, but we also show that the main theorem can extended to general kinetics, which generalizes existing results in the literature.  Our approach is based on a direct analysis of the Kolmogorov equation, in contrast to classical approaches in the existing literature.

\end{abstract}

\section{Introduction}

%
%
%
%
%

Consider a reaction network with a discrete number of copies for each species, a problem of increasing importance in cell biology.  The evolution of the copy number of each chemical species can be modeled using a continuous-time, discrete-space Markov process, and stochastic effects are well known to be present. The time evolution of this system can be computed by solving the so-called Kolmogorov equation, but this direct approach is rarely useful because of its high dimensionality. Therefore computational and analytic methods for estimating the distribution associated with a stochastic reaction network system have been developed \cite{ball2006asymptotic, cao2008optimal,  cao2016accurate, Gupta_Khammash2017, StoQSSA2019, KangKurtz2013,  kim2017reduction, sontag2017reduction,  Munskythesis, MunskyKhammash2008}.


In a stochastic system, some chemical species can have much higher molecular counts that the others. Furthermore, the intensity of each reaction can also vary over several orders of magnitude. For example, suppose $A$ and $B$ are proteins interacting in the network system 
\blue{\begin{align}\label{eq:example in intro}
A +B \xrightleftharpoons[\kappa_2 N]{\kappa_1} 2A \xleftarrow{\kappa_3} C, 
\end{align}
with the initial conditions $A(0)=1$, $B(0)=N$ and $C(0)=10$.  Here $N$ is a large scaling parameter, and it could mean the Avogadro number $6\times 10^{23}$, the total mass of the system, or the volume of the container where the reaction takes place.
Letting $X(t)=(A(t),B(t),C(t))$ be the stochastic process associated with the reaction network at time $t$, we suppose that the intensities of the reactions $A+B\to 2A$, $2A\to A+B$ and $C\to 2A$ are $\lambda_{A+B\to 2A}(X(t))=\kappa_1 A(t)B(t)$, $\lambda_{2A\to A+B}(X(t))=\kappa_2 N A(t)(A(t)-1)$ and $\lambda_{C\to 2A}(X(t))=\kappa_3 C(t)$, respectively. Note that around $t=0$, the intensity of $C\to \emptyset$ is much smaller than the intensities of the other reactions. Thus we can expect that reaction $C\to 2A$ is rarely fired and hence the copy number of $C$ evolves in slow-time scale.}

As shown in the example above, due to the size of intensities, the dynamics of each species in a reaction system can have different time scales. If the size of species and the size of intensities satisfy a particular balance condition, then the system can be decomposed into two or more subsystems, each of which converges to a lower dimensional system as the scaling parameter tends to infinity \cite{ball2006asymptotic, KangKurtz2013}. Depending on the time scale, the limiting system of the subsystems could be either a stochastic, deterministic or piece-wise deterministic model \cite{anderson2017finite, ball2006asymptotic,  enciso2019embracing, StoQSSA2019, KangKurtz2013}.


  

 

\blue{In this paper, we show that a multiscale stochastic model under a short-term timescale converges in distribution to an associated reduced model. To illustrate this, we consider the toy model in \eqref{eq:example in intro}. By modeling system \eqref{eq:example in intro} with a continuous time Markov process, the holding time for the next reaction is exponentially distributed with rate $\lambda_{A+B\to 2A}(X(t))+\lambda_{2A\to A+B}(X(t))+\lambda_{C\to 2A}(X(t))$. Hence the expected holding time until the next reaction around $t=0$ is of order $\frac{1}{N}$. This indicates that the number of reactions fired within $[0,\frac{T}{N}]$ is of constant order on average for $T>0$. This amount of transitions is substantial for the low copy species $A$ and $C$, but it is too small to considerably fluctuate the concentration of $B$, $\frac{B(t)}{N}$, within $[0,\frac{T}{N}]$. In this vein, for the scaled process $(A(t)),\frac{B(t)}{N},C(t))$, we can approximately freeze $\frac{B(t)}{N}$ at $\frac{B(0)}{N}=1$. Then the dynamics of species $A(t/N)$ in the original system \eqref{eq:example in intro} under the short-term timescale can be approximated with a reduced system
$A\xrightleftharpoons[\kappa_2 ]{\kappa_1 } 2A$.}

In general, we show that for some $\theta_0>0$, the short-term distribution $p^N(t/N^{\theta_0})$ of the original network system converges to the distribution $p(t)$ of the limiting reduced system for any $t$ in a compact time interval $[0,T]$, as the scaling parameter $N$ tends to infinity.  The main theoretic tools for this result rely on a direct calculation using the Kolmogorov equation, and this allows us to obtain the error bound 
\begin{align}\label{eq:error bound intro}
\sup_{t\in [0,T]}|p^N(U,t/N^{\theta_0})-p(U,t)|\le \frac{c}{N^\nu}. 
\end{align}
Here $U$ is an arbitrary subset of the state space, and the constants $c>0$ and $\nu \in (0,1)$ are independent of $U$.

For the main theorem and  relevant lemmas, we assume that the associated stochastic system for a reaction system is under mass-action kinetics. However, the main result can be extended to general kinetics such as Michealis-Mentum kinetics and hill type functions. Furthermore since the proof of the main theorem does not require network structural restrictions, this convergence result holds not only for bimolecular chemical reaction systems, but also for higher order reaction systems with general kinetics. Instead of structural restrictions, we assume that the reduced reaction system admits a stationary distribution with a finite moment condition. Since the finite moment condition of the reduced network system guarantees the non-explosivity of the original multiscale model. Therefore the error bound \eqref{eq:error bound intro} holds for any time $T$. By showing this error bound, this work provides a addition to previous related studies of multiscaling limits. 


This manuscript has the following outline. In Section \ref{sec:reactionnetworks} we introduce the basic notions of a stochastic system for a reaction network. In the same section, we also provide a multiscale framework for the stochastic model and introduce the idea of network projection. Key lemmas and the main theorem of this paper including proofs are introduced in Section \ref{sec:main}. In section \ref{sec:generalization}, we discuss some extension of the main result for general kinetics. In Section \ref{sec:examples}, in order to demonstrate how the main theorem can be applied for practical models, we provide various examples of biological models such as a futile cycle, a system of yeast polarization, p53 response to DNA damage and a population model with three species. In Appendix \hyperref[app:table]{A}, a table of notations used in the entire paper is provided. In Appendix \hyperref[app1]{B} proofs of some technical lemmas are given.

\section{Preliminaries} \label{sec:reactionnetworks}

\subsection{Stochastic Reaction Networks}

In this section, we provide a mathematical description of chemical reaction networks, with an emphasis on their associated stochastic dynamics. A \emph{reaction network} is a graphical configuration consisting of constituent species, complexes (that is, combinations of species), and reactions between complexes. A triple $(\S,\C,\Re)$ represents a reaction network where $\S, \C$ and $\Re$ are collections of species, complexes and reactions, respectively. 

\begin{definition}
A reaction network is defined with a triple of finite sets $(\S,\C,\Re)$ such that
\begin{enumerate}
\item the species set $\S=\{S_1,S_2,\dots,S_d\}$ contains the species belonging to the reaction network,
\item the complex set $\C=\{y_1,y_2,\dots,y_c\}$ contains complexes $y_k$, where for each $k$,
\begin{align*}
y_k=\sum_{i=1}^d y_{ki}S_i \quad \text{for some non-negative integers $y_{ik}$, and} 
\end{align*}
\item the reaction set $\Re=\{R_1,R_2,\dots,R_r\}$ consists of ordered pairs $(y,y')$ such that $y, y'\in \C$.
\end{enumerate}
\end{definition}

In the graphical configuration of a reaction network $(\S,\C,\Re)$, we represent complexes $y,y’,\ldots \in \C$ by nodes, and we use directed edges $y \to y'$ to denote reactions $(y,y') \in \Re$. In order to define a dynamical system associated with a reaction network $(\S,\C,\Re)$, we denote a complex $y_k$ by either a linear combination of species such as $\sum_{i=1}^d y_{ki}S_i$ or a $d$-dimensional vector $y_k=(y_{k1},\dots,y_{kd})^T$, interchangeably.
In case $y_{ki}=0$ for all $i$, the corresponding complex $y_k=\sum_{i=1}^d y_{ki}S_i$ is denoted by $\emptyset$ in the graphical configuration of the reaction network.

\begin{example}\textnormal{
Consider the following reaction network describing a substrate-enzyme system with a protein dilution:
\[
	S+E \rightleftharpoons C \rightarrow E+P, \quad P\to \emptyset.
\]
For this reaction network, $\S = \{S,E,C,P\}$, $\C=\{S+E,C,E+P,\emptyset\}$ and $\Re=\{S+E\rightarrow C, C\rightarrow S+E,C\rightarrow E+P, P\to \emptyset\}$. \hfill $\triangle$}
\end{example}

We now describe the stochastic dynamics of a reaction network using a continuous time, discrete state Markov process.  At any time $t$, the counts of each species are given by a $d$-dimensional vector $X(t)=(X_1(t),\dots,X_d(t)) \in \Z^d_{\ge 0}$. The transitions of the Markov process are determined by the given reactions. In order to define the transition probabilities, we use state-dependent intensity (or propensity) functions $\lambda_{k}:\Z^d_{\ge 0} \to \R_{\ge 0}$ of the reaction $y_k\to y'_k$.  For example, the reaction $y_k \to y_k’$ induces a transition from a given state $z$ into the state $z+y_k’-y_k$ with intensity $\lambda_k(z)$.
More generally, we have

\begin{align}
P(X(t+\Delta t) = z+y'-y \ | \ X(t)=z) = \sum_{\substack{y_k\to y'_k\in \Re\\ y'_k-y_k=y'-y}}\lambda_{k}(z)  \Delta t + o(\Delta t), \label{prob}
\end{align} 
for each state $z$ in the state space $\mathbb{S}$ of the associated Markov process $X$.  The copy number of a species $S$ at time $t$ will be denoted by $S(t)$.
 Let $p(z,t)=P(X(t)=z)$, for a given state $z$ and time $t$. Then $p(z,t)$ solves the so-called Kolmogorov forward equation, also known as the chemical master equation:
\begin{align}\label{eq:master}
\frac{d}{dt}p(z,t)=\sum_{k}\lambda_{k}(z-y'_k+y_k)p(z-y'_k+y_k,t)-\sum_{k}\lambda_{k}(z)p(z,t),
\end{align}
 where $\sum_{k}$ denotes the sum over all reactions in $\Re$. A \emph{stationary distribution} $\pi$ is a positive stationary solution of the Kolmogorov equation above such that $\sum_{z}\pi(z)=1$.  

The usual choice of intensity $\lambda_k(x)$ of a reaction $y_k\to y'_k$ in a network $(\S,\C,\Re)$ with rate constant $\kappa_k$ is
\begin{align}\label{eq:mass}
\lambda_{k}(x)= \kappa_{k} x^{(y)},
\end{align}
where $n^{(k)}=n\cdot(n-1)\cdots(n-k+1)$ for non-negative integer vector $n,k$ , $n^{(k)}=0$ if $n<k$, and $u^{(v)}=\prod_{i=1}^d  u_i^{(v_i)}$ for $u \in \Z^d, v \in \Z^d_{\ge 0}$. This choice of intensities is called \textit{stochastic mass action kinetics}. \blue{An analogous \emph{deterministic mass action kinetics} for a reaction $y_k \to y_k$ is $x^{y_k}$, where we define $u^v=\prod_{i=1}^d u_i^{v_i}$ for $u, v\in\R^d_{\ge 0}$.
}  

The stochastic process $X(t)$ also has another representation, so-called random time change representation. \cite{Kurtz72} 
\begin{align}\label{eq:kurtz rep}
X(t)=X(0)+\sum_{k}Y_k\left (\int_0^t \lambda_k(X(s)) ds \right) (y'_k-y_k),
\end{align}
where $Y_k$'s are independent unit Poisson random variables.




In the graph associated to a reaction network, the rate constants $\kappa$ typically appear next to the reaction arrow as in $y\xrightarrow{\kappa} y'$. \blue{Through this manuscript, we model a reaction system using mass-action kinetics so that each reaction intensity $\lambda_k=\lambda_k(x)=\kappa_k x^{(y_k)}, k=1,\ldots r$. For a given reaction network $(\S,\C,\Re)$, we denote by $\K$ the set of reaction intensities $\lambda_k$. We simply denote by $(\S,\C,\Re,\K)$ the system associated with the reaction network $(\S,\C,\Re)$ with $\K$ and call it a reaction system. Using this framework, the probabilities \eqref{prob} describe the dynamics of the stochastic process $X(t)$ associated with the reaction system $(\S,\C,\Re,\K)$.}



\subsection{Multiscaling for Reaction Networks}\label{subsec:multi}

In this section, we describe how to carry out a multiscaling procedure for a given stochastic reaction system. We use a similar notation as in the work by Kang and Kurtz \cite{KangKurtz2013}. Let $N$ be a scaling parameter, which could be interpreted as either the volume of the system, Avogadro's number, or any biological parameter. \blue{We use the conventional big $\Theta$ notion: for a real-valued sequence $a_n$ 
\begin{align*}
    a_n=\Theta(n^\gamma) \text{ if and only if there exists $c>0$ such that $\dfrac{1}{c}\le \dfrac{|a_n|}{n^\gamma} \le c$ for all $n$.}
\end{align*}} \blue{Let \blue{$X^N(t)=(X^N_1(t),\dots,X^N_{d}(t))$} be a stochastic process associated with $(\S,\C,\Re,\K)$. Assuming that each species may have a different magnitude of initial abundance, we scale $X^N(t)$ by using two sets of scaling exponents,}
\begin{align}\label{eq:scaling expo}
   \{\alpha_i : S_i \in \S\}\quad \text{and}  \quad \{\beta_k : y_k\to y'_k\in \Re\}.
\end{align}
 Each $\alpha_i$ represents the size of the abundance of species $S_i$ such that $X^N  _i(0)=\Theta(N^{\alpha_i})$.
If $\alpha_i=1$, then $N^{-\alpha_i}X_i(t)$ may represent the concentration of $S_i$ at time $t$. 
\blue{For simplicity, we assume that $X^N_i(0)=N^{\alpha_i}z^0_i$ for some $z^0_i\in \R_{\ge 0}$.}
We also assume that the rate constant of the reactions have different orders of magnitude so that we have scaled rate constants $N^{\beta_k}\kappa_k$ for each reaction $y_k\to y'_k \in \Re$. 

By the representation \eqref{eq:kurtz rep}, the scaled process $Z^N_i(t) =N^{-\alpha_i}X_i(t)$, $i=1,2,\dots,d$, solves
\begin{align}\label{eq:multi rep}
Z^N_i(t)&=Z^N_i(0)+\sum_k Y_k\left ( \int_0^t N^{\beta_k}\lambda_k(X^N(s))ds \right)\frac{(y'_{k,i}-y_{k,i})}{N^{\alpha_i}},
\end{align}
where $Y_k$'s are i.i.d. unit Poisson random variables.

For additional details on the definition and uses of multiscaling in stochastic and deterministic systems, see  \cite{ball2006asymptotic, enciso2019embracing, StoQSSA2019, KangKurtz2013, pfaffelhuber2015scaling}.

\subsection{Order of reaction intensities under a short-term timescale}\label{subsec:two scales}
\blue{
In this section we show that any multi-scale reaction system admits at most constant order of reaction intensities under a certain short-term timescale. For a multiscale process $X^N(t)$ associated with reaction system $(\S,\C,\Re,\K)$, let $\theta_0$ be the maximum order of reaction intensities. That is,
\begin{align}
\begin{split}\label{def:theta0}
    \theta_0&=\max\{ \theta_k : N^{\beta_k} \lambda_k(X^N(0)) =\Theta(N^{\theta_k})  \}=\max_k\{ \beta_k+y_k\cdot \alpha \},
    \end{split}
\end{align}
where $\alpha=(\alpha_1,\dots,\alpha_d).$}

\blue{
We consider time-scaled model $Z^{N,\gamma}_i(t):=Z^N(N^\gamma t)$. Then by \eqref{eq:multi rep} with a change of variable, $\Zs$ satisfies that
\begin{align}
Z^{N,\gamma}_i(t)=Z^N(N^\gamma t)&=Z^N(0)+\sum_k Y_k\left ( \int_0^{N^\gamma t} \lambda^N_k(Z^N(s))ds \right)\frac{(y'_{k,i}-y_{k,i})}{N^{\alpha_i}} \notag \\
&=Z^{N,\gamma}(0)+\sum_k Y_k\left ( \int_0^t \lambda^{N,\gamma}_k(Z^{N,\gamma}(s))ds \right)\frac{(y'_{k,i}-y_{k,i})}{N^{\alpha_i}} \label{eq:multi rep_time}
\end{align}}

\blue{ where 
\begin{align}
\begin{split}\label{eq:scaled intensitiy}
\lambda^{N,\gamma}_k(z)&=N^{\gamma+\beta_k} \prod_{i=1}^d N^{y_{k,i}\alpha_i}z_i\left( z_i-\frac{1}{N^{\alpha_i}} \right) \left (z_i-\frac{2}{N^{\alpha_i}} \right )\cdots \left (z_i-\frac{y_{k,i}-1}{N^{\alpha_i}}\right )\\
&=N^{\gamma+\beta_k+y_k\cdot \alpha } \prod_{i=1}^d z_i\left( z_i-\frac{1}{N^{\alpha_i}} \right) \left (z_i-\frac{2}{N^{\alpha_i}} \right )\cdots \left (z_i-\frac{y_{k,i}-1}{N^{\alpha_i}}\right )
\end{split}
\end{align}
 for $z \in \R^d_{\ge 0}$.  Note that since $Z^{N,\gamma}_i(0)=\Theta(1)$ for each $i$, each scaled intensity $\lambda^N_k(Z^{N,\gamma}(0))$ is  $\Theta(\gamma+\beta_k+y_k\cdot \alpha)$.
 For the set of intensities $\K=\{\lambda_k: y_k \to y'_k \in \Re\}$ of the original system $X$, we associate the set of scaled intensities $\lambda^{N,\gamma}_k$ in \eqref{eq:multi rep_time} with $Z^{N,\gamma}$ and denote by $\K^{N,\gamma}$.}

\subsection{Projection of Multiscale Reaction Systmes}\label{subsec:projection}
 \blue{We can reduce $(\S,\C,\Re,\K^N)$ to consider only the dynamics of a subset of $\S$ by using network projection, which broadly speaking consists of the removal of species from the network as described for the example \eqref{eq:example in intro} in Introduction, and the subsequent merging of complexes if needed.}

For a given system $(\S,\C,\Re,\K)$,  to formally define the network projection we introduce two projection functions for complexes and reactions in $(\S,\C,\Re)$.  Let $X^N$ be the stochastic process associated with $(\S,\C,\Re,\K)$.
We decompose the set of species as $\S=\S_L\cup \S_H$, where $\S_L=\{S_i \in \S : X^N_i(0)=\Theta(1)\}$ and $\S_H=\S\setminus \S_L$ correspond to species with low and high initial counts, respectively. We enumerate them as $\S_L=\{S_1,S_2,\dots,S_d\}$ and $\S_H=\{S_{d+1},S_{d+2},\dots,S_{d+r}\}$. Let $q_L : \Z^{d+r}\to \Z^d$ and $q_H : \Z^{d+r}\to \Z^r$ be projection functions such that for each $v=(v_1,\dots,v_d,v_{d+1},\dots,v_{d+r})^T\in \Z^{d+r}$,
\begin{align}\label{eq:project function}
q_L(v)=(v_1,v_2,\dots,v_d)^T\in \Z^d \quad \text{and} \quad q_H(v)=(v_{d+1},v_{d+2},\dots,v_{d+r})^T\in \Z^r.
\end{align}

 We demonstrate the usage of $q_L$ and $q_H$ with the network \eqref{eq:example in intro} for which we set $\S_L=\{A,C\}$ and $\S_H=\{B\}$. Since the associated vector for the complex $y=A+B$ in \eqref{eq:example in intro} is $y=(1,1,0)^T$, we have $q_L(y)=(1,0)$ and $q_H(y)=1$. Using a slight abuse of notation, we also denote $q_L(A+B)=A$ and $q_H(A+B)=B$. In the same way, for the reaction $A+B\to \emptyset$, $q_L$ defines the projected reaction $q_L(A+B)\to q_L(\emptyset)$, which is identical to $A \to \emptyset$.

\blue{Let  $(\S,\C,\Re,\K^N)$ be a given multiscale system. Then by using $q_L$ and $q_H$ we define the projected system $(\S_L,\C_L,\Re_L,\K_L)$. Reactions in $\Re_L$ are chosen pertaining to the scale of the reaction intensities in $\K^N$ because reactions of lower order intensities can be neglected. Let $\Zs(t)$ be the stochastic process associated with $(\S,\C,\Re,\K^N)$ and let $\theta_0$ be the maximum reaction intensity order \eqref{def:theta0}. Then we decompose 
\begin{align}\label{eq:decompose R}
    \Re=\Re_0\cup \Re^c_0 \quad \text{where \ $\Re_0=\{y_k\to y_k' : \lambda_k(Z^N(0))=\Theta(\theta_0)\}$},
\end{align}
and we define the set of projected reactions as
\begin{align}\label{eq:project network2}
    \Re_L=\{q_L(y_k)\to q_L(y'_k) : \text{$y_k\to y_k' \in \Re_0$ and $q_L(y_k)\neq q_L(y'_k)$}\}.
\end{align} The set of complexes of the projected network $\C_L$ is fully characterized with the complexes involved in the reactions in $\Re_L$.
}

\blue{To defined the reaction intensities of the scaled process $Z^{N,\gamma}(t)$ associated with the projected network, we first decompose the reaction intensities defined in \eqref{eq:scaled intensitiy} and then we fix the species in $S_H$ at their initial state. For a given $\K^N$, each mass-action intensity $\lambda^{N,\gamma}_k$ for a reaction $y_k\to y_k'$ is decomposed as
$\lambda^{N,\gamma}_k(z)=\kappa_k \lambda_{L,k}(q_L(z))\lambda^{N,\gamma}_{H,k}(q_H(z))$ for each $z\in\R^{d+r}_{\ge 0}$, where
\begin{align}
\begin{split}\label{eq:decomp of lam}
    &\lambda_{L,k}(z)=q_L(z)^{(q_L(y_k))}, \text{\ and \ } \\ &\lambda^{N,\gamma}_{H,k}(z)=N^{\gamma+\beta_k+y_k\cdot \alpha}\prod_{i=d+1}^{d+r} z_i\left (z_i-\frac{1}{N} \right)\cdots \left (z_i-\frac{y_{k,i}-1}{N} \right).
\end{split}
\end{align}
 Let $Z^{N,\gamma}(0)=z^0=(z^0_\ell,z^0_h)$ such that $z^0_\ell \in \mathbb{Z}^d_{\ge 0}$ and $z^0_h\in \R^r_{\ge 0}$. Then by fixing $q_H(\Zs(t))$ at $z^0_h$, we define the reaction intensities of the projected system as
\begin{align}\label{eq:rate constants for reduced}
\K_L=\left \{\bar \lambda_{u}(x)=\bar \kappa_u x^{(\bar y_u)} :  \bar y_u \to \bar y'_u \in \Re_L \right \},
\end{align}
where
\begin{align*}
    \bar \kappa_u=\sum_{\substack{y_k\to y'_k \in \Re \\ q_L(y_k)=\bar y_u, q_L(y'_k)=\bar y'_u}} \kappa_k s_k, \quad \text{and} \quad s_k= \dlim_{N\to \infty} \lambda^{N,\gamma}_{H,k}(z^0_h).
\end{align*}
 $\bar \kappa_u$ serves a reaction rate constant of the projected system.
 Note that each $\bar \lambda_u$ in $\K_L$ is scale-free.
 \begin{remark}
  The summation in the definition of $\bar \kappa_u$ is to consider the case that multiple reactions in $\Re$ are projected in to a single reaction $\bar y_u \to \bar y'_u$ in $\Re_L$. 
  \end{remark}
 \begin{remark}\label{rmk:alternative K^N}
 For each $x\in\Z^d_{\ge 0}$, by definition of $\lambda_{L,k}$ we have $x^{(\bar y_u)} = \lambda_{L,k}(x)$ for any $k$ such that $q_L(y_k)=\bar y_u$. Hence letting $k_u$ be such that $q_L(y_{k_u})=\bar y_u$, $\K_L$ can be represented differently as $\K_L=\left \{\bar \lambda_{u}(x)=\bar \kappa_u \lambda_{k_u}(x) :  \bar y_u \to \bar y'_u \in \Re_L \right \}$.
 \end{remark}
 \begin{remark}\label{rmk:sk}
  If $y_k \to y'_k \in \Re_0$, then $-\theta_0+\beta_k+y_k \cdot \alpha=0$ by the definition of $\theta_0$ in \eqref{def:theta0}. Hence for $y_k \to y'_k \in \Re_0$
 \begin{align*}
      s_k&=\lim_{N\to \infty}\lambda^{N,-\theta_0}_{H,k}(q_H(Z^{N,-\theta_0}(0))\\
      &=\lim_{N\to \infty}N^{-\theta_0+\beta_k+y_k\cdot \alpha}\prod_{i=1}^r z^0_{h,i}\left (z^0_{h,i}-\frac{1}{N}\right )\cdots \left (z^0_{h,i}-\frac{q_H(y_k)_i-1}{N}\right )   = {(z^0_h)}^{q_H(y_k)}.
 \end{align*}
 If $y_k\to y'_k \in \Re^c_0$, otherwise, then $s_k=0$ since $-\theta_0+\beta_k+y_k\cdot \alpha<0$.
 \end{remark}
 }

We demonstrate the projection of a reaction system with an example.
\begin{example}\textnormal{\blue{
Consider a stochastic process $X^N(t)$ associated with a reaction network $(\S,\C,\Re,\K)$ such that
\begin{align*}
 A+B\xrightleftharpoons[\kappa_2]{\kappa_1/N}  2B, \quad A+C \xrightarrow{\kappa_3} 2C, \quad 3C \xrightleftharpoons[\kappa_5 N]{\kappa_4/N^2} A, \quad B\xrightarrow{N\kappa_6} \emptyset.
\end{align*}
Suppose that $X^N_A(0)=X^N_B(0)=2$ and $X^N_C(0)=3N$. Then to reduce the scaled system $Z^N$, note that $\S=\S_L\cup \S_H$ with $S_L=\{A,B\}$ and $\S_H=\{C\}$.  To find $\Re_L$, note that $\theta_0=1$ and the reaction intensities of $A+C\to 2C, 3C\rightleftharpoons A$ and $B\to \emptyset$ are belonging to $\Re_0$. Therefore, by projecting those reactions with $q_L$, we obtain
\begin{align*}
    \Re_L=\{A \to \emptyset, \emptyset \to A, B\to \emptyset\},
\end{align*}
here note that two reactions $A+C\to 2C$ and $A\to 3C$ are merged into the same reaction $A\to \emptyset$.
Finally, by \eqref{eq:rate constants for reduced}
\begin{align*}
    \K_L=\{\bar \lambda_{A\to \emptyset}(x)=(s_3\kappa_3+s_5\kappa_5)x_A, \bar \lambda_{\emptyset\to A}(x)=s_4\kappa_4, \bar \lambda_{B\to \emptyset}(x)=s_6\kappa_6 x_B\}
\end{align*}
 can be defined for each $x=(x_A,x_B)\in \Z^2_{\ge 0}$ by freezing $\Zt_C(t)$ at $\Zt_C(0)=3$. 
 To compute $s_3$, note that $\beta_3=0$ and $y_3\cdot \alpha=1$. Then as shown in Remark \ref{rmk:sk}, for the initial condition $z^0=(1,2,3)$ of the scaled process $Z^{N,-\theta_0}$
\begin{align*}
    &s_3=q_H(z^0)^{q_H(y_3)}=3, \quad s_4=q_H(z^0)^{q_H(y_4)}=27\\
    &s_5=q_H(z^0)^{q_H(y_5)}=1, \quad s_6=q_H(z^0)^{q_H(y_6)}=1.
\end{align*}
 Therefore $(\S_L,\C_L,\Re_L,\K_L)$ is described with 
\begin{align*}
A\xrightleftharpoons[27\kappa_4]{3\kappa_3+\kappa_5}  \emptyset \xleftarrow{\kappa_6} B
\end{align*}}}
\end{example}

\section{Main Results}\label{sec:main}
In this section, we introduce our main results. In \cite{ball2006asymptotic, KangKurtz2013}, it was shown that if the scaling exponents $\alpha_i$, $\beta_k$, and $\gamma$ in \eqref{eq:multi rep_time} satisfy certain balance conditions, then species of high abundance follow a system of differential equations with random coefficients, and the species of low abundance follow a piece-wise deterministic Markov process. \blue{In this paper, for a given $X^N(t)$ with scaling parameters $\alpha_k$, $\beta_k$,  we consider $=\Zt(t)$ under slow-timescale. Under this timescale, we show that the species in $\S_L$ approximately follow the projected system $(\S_L,\C_L,\Re_L,\K_L)$ as the scaling parameter tends to infinity. We further investigate the accuracy of this approximation, which has not been investigated in the previous work.}

\subsection{Main Theorem}
\blue{For a scaled process $\Zt(t)$ associated with $(\S,\C,\Re,\K^N)$, let $Z(t)$ be the stochastic process associated with the projected system $(\S_L,\C_L,\Re_L,\K_{L,s})$ as defined in Section \ref{subsec:projection} such that $Z(0)=q_L(\Zs(0))$. We denote by $p^N(\cdot,t)$ and $p(\cdot,t)$ the probability density of $\Zt$ and $Z$, respectively. 
 Throughout this paper, we always assume that $\S=\S_L\cup S_H$ such that $\S_L=\{S_i\in \S\ | \ X^N_i(0)=\Theta(1)\}$ and $\S_H=\{ S_i\in \S \ | \ X^N_i(0)=\Theta(N)\}$.} We further assume that the stochastic system $Z(t)$ associated with the projected network $(\S_L,\C_L,\Re_L,\K_{L})$ is irreducible and admits a stationary distribution $\pi$ such that 
  \begin{align}\label{condition3:tail} 
  \sum_{x\in \Z^d_{\ge 0} }\sum_{u} \bar \lambda_u (x)^2 \pi(x) < \infty.
\end{align}
\blue{This condition is required to exclude irregular behavior of $Z$ and in turn $Z^{N,-\theta_0}$ such as explosion.}

\begin{theorem}\label{thm:main1}
For each $t$, $\Zt(t)$ converges to $Z$ in distribution as $N\to \infty$. Furthermore, there exists constants $c >0$ and $\nu \in (0,1)$ such that for any measurable set $A \subset \R^{d+r}_{\ge 0}$
\begin{align*}
    \sup_{t\in [0,T]}\left |p^N\left (A ,t \right )-p(A_L,t) \right | \le\frac{c \max\{1,T^2\}}{N^\nu} \quad \text{for any $T>0$},
\end{align*}
where $A_L=\{q_L(z)  \ | \ z\in A \}$.
\end{theorem}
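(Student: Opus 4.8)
The plan is to follow the route announced in the introduction and work directly with the Kolmogorov forward equation \eqref{eq:master} for $\Zt$, rather than through martingale or generator‑convergence arguments. Write a state of $\Zt$ as $z=(x,w)$ with $x\in\Z^d_{\ge0}$ the low‑copy part and $w$ the high‑copy part, and let $\mu^N(x,t)=\sum_w p^N((x,w),t)$ be the low‑copy marginal of $\Zt(t)$. The heart of the argument is to show that $\mu^N$ itself satisfies the Kolmogorov forward equation of the projected system $(\S_L,\C_L,\Re_L,\K_L)$ up to an explicitly small perturbation, and then to close a total‑variation (Gronwall‑type) estimate against $p$.

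First I would derive the equation for $\mu^N$. Summing \eqref{eq:master} for $\Zt$ over $w$ and using the factorization $\lambda^{N,-\theta_0}_k(z)=\kappa_k\,\lambda_{L,k}(q_L(z))\,\lambda^{N,-\theta_0}_{H,k}(q_H(z))$ from \eqref{eq:decomp of lam}, the high‑copy shift in each gain term is undone by a change of summation variable, leaving
\[
\tfrac{d}{dt}\mu^N(x,t)=\sum_k\kappa_k\,\lambda_{L,k}(x^-_k)\,\Lambda^N_k(x^-_k,t)-\sum_k\kappa_k\,\lambda_{L,k}(x)\,\Lambda^N_k(x,t),
\]
where $x^-_k:=x-q_L(y'_k)+q_L(y_k)$ and $\Lambda^N_k(x,t):=\sum_w\lambda^{N,-\theta_0}_{H,k}(w)\,p^N((x,w),t)$. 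If the high‑copy coordinates were frozen at $z^0_h$ one would have $\Lambda^N_k=\lambda^{N,-\theta_0}_{H,k}(z^0_h)\,\mu^N$ with $\lambda^{N,-\theta_0}_{H,k}(z^0_h)\to s_k$ by Remark~\ref{rmk:sk}, so I write $\Lambda^N_k=s_k\mu^N+\varepsilon^N_k$. Grouping the $\kappa_k s_k\lambda_{L,k}$ terms by the reaction of $\Re_L$ they project onto --- reactions with $q_L(y_k)=q_L(y'_k)$ cancel between gain and loss, reactions in $\Re^c_0$ have $s_k=0$, and for the rest $\sum_k\kappa_k s_k\,\lambda_{L,k}=\bar\lambda_u$ after summing over the reactions projecting to a given $u$, by the definition \eqref{eq:rate constants for reduced} of $\bar\kappa_u$ and $\bar\lambda_u$ --- the leading part collapses exactly to the forward generator $\mathcal{G}$ of $Z$. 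Hence $\delta^N:=\mu^N-p$ solves $\tfrac{d}{dt}\delta^N=\mathcal{G}\delta^N+E^N$ with $\delta^N(\cdot,0)=0$, where $E^N$ is assembled from $\kappa_k\lambda_{L,k}\varepsilon^N_k$ for $k\in\Re_0$ and $\kappa_k\lambda_{L,k}\Lambda^N_k$ for $k\in\Re^c_0$.

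Next, because $\mathcal{G}$ is the forward generator of a non‑explosive chain --- here is where condition \eqref{condition3:tail} enters, via the lemma that square‑summability of the projected propensities against $\pi$ forces non‑explosivity of $Z$ and of $\Zt$ --- the flow $\nu\mapsto e^{t\mathcal{G}}\nu$ is an $\ell^1$‑contraction on signed measures, so $\tfrac{d}{dt}\|\delta^N(\cdot,t)\|_{\ell^1}\le\|E^N(\cdot,t)\|_{\ell^1}$ and thus $\sup_{t\le T}\|\mu^N(\cdot,t)-p(\cdot,t)\|_{\ell^1}\le T\sup_{t\le T}\|E^N(\cdot,t)\|_{\ell^1}$. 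It then remains to bound $\|E^N(\cdot,t)\|_{\ell^1}\le 2\sum_k\kappa_k\sum_x\lambda_{L,k}(x)\,|\varepsilon^N_k(x,t)|$ (with $\varepsilon^N_k=\Lambda^N_k$ for $k\in\Re^c_0$). For $k\in\Re_0$ I split $\lambda^{N,-\theta_0}_{H,k}(w)-s_k$ into a falling‑power‑versus‑power discrepancy of size $O(N^{-1})$ times a polynomial in $w$, plus $|w-z^0_h|$ times a polynomial in $w$ and $z^0_h$; for $k\in\Re^c_0$ I use $\lambda^{N,-\theta_0}_{H,k}(w)=\Theta(N^{-\theta_0+\beta_k+y_k\cdot\alpha})\cdot(\text{poly in }w)$ with $-\theta_0+\beta_k+y_k\cdot\alpha<0$. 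Summing against $p^N((x,w),t)$ and using Cauchy--Schwarz, $\|E^N(\cdot,t)\|_{\ell^1}$ is controlled by uniform‑in‑$(N,t\le T)$ moment bounds for $\lambda_{L,k}(q_L(\Zt(t)))$ together with $\mathbb{E}\bigl[|q_H(\Zt(t))-z^0_h|^2(1+|q_H(\Zt(t))|)^{2m}\bigr]=O\bigl((t^2+t)/N^2\bigr)$, the latter holding because by the random‑time‑change representation \eqref{eq:multi rep_time} only $\Theta(t)$ reactions fire in $[0,t]$ in expectation and each displaces $q_H$ by $O(1/N)$. This yields $\|E^N(\cdot,t)\|_{\ell^1}=O(\sqrt{t^2+t}\,/\,N^\nu)$ for a suitable $\nu\in(0,1)$ --- the only loss below $N^{-1}$ coming from $\Re^c_0$‑reactions whose intensity exponent lies in $(-1,0)$ --- so that $\sup_{t\le T}\|\mu^N(\cdot,t)-p(\cdot,t)\|_{\ell^1}=O(T\sqrt{T^2+T}/N^\nu)=O(\max\{1,T^2\}/N^\nu)$. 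Since $Z$ lives on $\Z^d$ and $A_L=q_L(A)$, the stated bound for a measurable $A$ follows from $p^N(A,t)\le\mu^N(A_L,t)$ together with the concentration of $q_H(\Zt(t))$ near $z^0_h$ obtained above, and convergence in distribution is the $N\to\infty$ limit.

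The step I expect to be the main obstacle is the uniform‑in‑$N$ moment control for $\Zt$ over the horizon $[0,T]$. Hypothesis \eqref{condition3:tail} concerns the \emph{stationary} law of the reduced chain $Z$, whereas $\Zt$ is a genuinely different, $N$‑dependent process started from a point mass, so it cannot be quoted directly. The bridge is to exhibit a Foster--Lyapunov function $V$ built from the propensities (a polynomial of sufficiently high degree, or $1+\sum_u\bar\lambda_u^2$) for which the generators of both $Z$ and $\Zt$ satisfy a negative‑drift inequality $\mathcal{G}V\le -c_1V+c_2$ with $c_1>0$ and $c_1,c_2$ independent of $N$; this forces $\mathbb{E}[V(\Zt(t))]\le V(\Zt(0))+c_2/c_1$, bounded uniformly in both $t$ and $N$, which is exactly what the moment step needs and which is also why $c$ and $\nu$ in the theorem are $T$‑independent, with the $\max\{1,T^2\}$ tracing back purely to the reaction‑count variance and the Duhamel time integral. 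It is precisely the square‑integrability in \eqref{condition3:tail}, rather than mere finiteness of first moments, that makes such a drift inequality available; everything else is bookkeeping with the factorization \eqref{eq:decomp of lam} and elementary falling‑power estimates.
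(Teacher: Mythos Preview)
Your overall architecture---marginalise the forward equation over the high-copy coordinates, recognise the leading part as the generator $\mathcal G$ of $Z$, and close by Duhamel/$\ell^1$-contraction---is sound in outline but differs markedly from the paper's route. The paper does not work on the full infinite state space at all: it truncates to a finite box $S_M=S_{L,M}\times S_{H,M}$ with $M=N^\rho$ (Lemma~\ref{lem:explosion}), compares the two Kolmogorov equations on that box via a matrix Gronwall inequality (Lemmas~\ref{lem:Gronwall} and~\ref{lem:for densities}), and controls the escape probability by a path-by-path coupling of the embedded jump chains of $\Zt$ and $Z$. Condition~\eqref{condition3:tail} is invoked only to bound $E(J(t)^2)$ for the \emph{reduced} chain $Z$ (Lemma~\ref{lem:mean of the jump number}); the passage to $\Zt$ is not through moments of $\Zt$ but through the fact that on $S_M$ the intensities of the two chains agree up to $O(N^{-\nu})$ (Lemma~\ref{lem:two intensities}), so jump-time laws and transition probabilities can be compared directly.

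The step you correctly single out as the crux is, as stated, a genuine gap. Square-integrability of $\sum_u\bar\lambda_u$ under $\pi$ does not in general manufacture a Foster--Lyapunov function with drift $\mathcal G V\le -c_1V+c_2$; a stationary moment bound is strictly weaker than geometric ergodicity, and neither $V=1+\sum_u\bar\lambda_u^2$ nor ``a polynomial of sufficiently high degree'' need satisfy such an inequality for a generic mass-action network. More seriously, even granting such a $V$ for $Z$, you give no mechanism for the \emph{same} $V$ (a function of the low-copy coordinates only) to satisfy a uniform-in-$N$ drift inequality under the generator of $\Zt$: that generator involves $\lambda^{N,-\theta_0}_{H,k}(q_H(\Zt))$, and until you already know $q_H(\Zt)$ stays near $z^0_h$ these factors are not controlled by the $\bar\lambda_u$---the very circularity you are trying to break. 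The paper's truncation-plus-coupling is precisely the device that severs this loop without ever needing moments of $\Zt$; your proposal would need either a stronger hypothesis (an explicit uniform Lyapunov function) or to import that truncation step after all.
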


\begin{remark}
Two lemmas are required to complete the proof of Theorem \ref{thm:main1}. We introduce the lemmas in Section \ref{sec:lemma}.
\end{remark}
\begin{proof}
 Lemma \ref{lem:explosion} shows that for any $M>0$ there exists a compact set $S_M=S_{L,M}\times S_{H,M}$ satisfying (i) $S_{L,M}\subset \Z^d_{\ge 0}$, $S_{H,M}\subset \R^r_{\ge 0}$ and (ii) for any $t$ there exists $c_1>0$ such that
\begin{align}\label{eq:prob of outside}
p^N(S_M^c,t)  \le \frac{c_1 \max\{1,t^2\}}{M^2} \quad \text{and} \quad p(S^c_{L,M},t) \le \frac{c_1 \max\{1,t^2\}}{M^2}.
\end{align}
 By using this we split the set $A$ as $A = ( A\cap S_M) \cup ( A\cap S^c_M )$.
Thus we have
\begin{align*}
\begin{split}
&|p^N(A,t) - p(A_L,t)| \\
&=|p^N(A\cap S_M ,t) +p^N(A\cap S^c_M  ,t)- p(A_L\cap S_{L,M},t)-p(A_L\cap S_{L,M}^c,t)|\\ 
&\le |p^N\left ( A\cap S_{M},t)-p(A_L\cap S_{L,M},t\right )|+p^N(S_M^c,t) + p(S_{L,M}^c,t).
\end{split}
\end{align*}
 Lemma \ref{lem:for densities} shows that there exist positive constants $c_2$ and $c_3$ such that
 \begin{align}\label{eq:the target term}
    |p^N\left ( A\cap S_{M},t)-p(A_L\cap S_{L,M},t\right )| \le  \dfrac{c_2 M^{c_3}}{N}\max\{1,t^2\}
 \end{align}
Thus if we choose $M=N^\rho$ for some $\rho < \dfrac{1}{c_3}$, then by \eqref{eq:prob of outside} and \eqref{eq:the target term}, the result follows with $c=c_1+c_2$ and $\nu=\min\{2\rho,1-\rho c_3\}$. 
 \end{proof}

Figure \ref{fig1} shows a schematic procedure of the proof of Theorem \ref{thm:main1}.

\begin{figure}[h!]
\includegraphics[width=\textwidth]{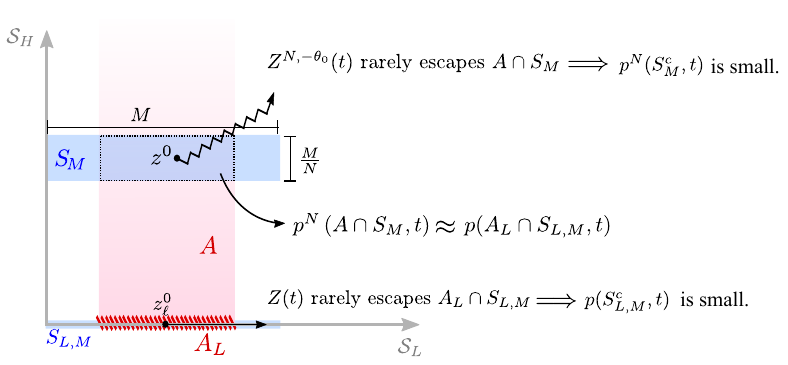}
\caption{A schematic description of the proof of the main theorem.}\label{fig1}
\end{figure}

\subsection{Lemmas}\label{sec:lemma}

In this section, we provide two main lemmas (Lemma \ref{lem:explosion} and \ref{lem:for densities}) used in the proof of Theorem \ref{thm:main1}. We further introduce additional lemmas required to proof the main lemmas. 
 We use the usual $\infty$-norm and $1$-norm for vectors in $\Z^m$. That is, for $v\in \Z^m$ 
\begin{align*}
\Vert v \Vert_{\infty}=\max\{v_i:i=1,2,\dots,m\} \quad \text{and} \quad 
\Vert v \Vert_1=\sum_{i=1}^m |v_i|. 
\end{align*} 
 Note that the state space of $Z^{N,-\theta_0}$ is a subset of $\Z^{d}_{\ge 0}\times \R^r_{\ge 0}$. 
Note further that the state space of $(Z^N_{d+1},\dots,Z^N_{d+r})$ depends on the scaling parameter $N$, as $Z^{N,-\theta_0}_i(t)=\frac{X^N_i(t/N^{-\theta_0})}{N}$ for $i=d+1,\dots,d+r$.
We denote the state space of $Z^{N,-\theta_0}$ by $\mathbb{S}_\ell\times \mathbb{S}_h$ such that $\mathbb S_\ell \subseteq \Z^d_{\ge 0}$ and $\mathbb S_h \subseteq \R^r_{\ge 0}$, where $q_L(\Zt (t))\in \mathbb{S}_\ell$ and $q_H(\Zt (t)) \in \mathbb{S}_h$.

In the rest of this manuscript, for $z\in \R^{d+r}_{\ge 0}$ we denote $z_\ell =q_L(z)$ and $z_h=q_H(z)$. Then we define $S_M=S_{L,M}\times S_{H,M}$ such that
\begin{align}
\begin{split}\label{eq:compact sets}
    &S_{L,M}=\{z_\ell \in \mathbb{S}_\ell : \Vert z_\ell \Vert_{\infty} \le M\}, \quad \text{and} \\
    &S_{H,M}=\left \{ z_h \in \mathbb{S}_h : \left |\lambda^N_{H,k}(z_h) -s_k \right | \le \frac{M}{N}  \text{\ \ for any $y_k\to y_k' \in \Re_0$} \right \},
\end{split}
\end{align}
where $M=N^{\rho}$ for arbitrary $\rho\in (0,1)$.
As shown in Figure \ref{fig1}, one of the key ideas for the main theorem is to show that $Z^N$ stays in the compact set $S_M$ within a finite time interval $[0,T]$ with high probability. In the following lemmas, we show that the probability of $\Zt$ escaping $S_M$ is low if $N$ is sufficiently large.

\begin{lemma} \label{lem:two intensities}
 For each $y_k\to y'_k\in \Re$, let $\bar y_u\to \bar y'_u \in \Re_L$ such that $q_L(y_k)=\bar y_u$ and $q_L(y'_k)=\bar y'_u$. Then there exist $
 \nu_1, \nu_2, c>0$ such that for any $z\in S_M$
\begin{align*}
 &\text{(i)}\quad  \left | \lambda^{N,-\theta_0}_{k}(z)-\frac{s_k \kappa_k}{\bar \kappa_u}\bar \lambda_{u}(z_\ell)   \right |\le  \kappa_k \lambda_{L,k}(z_\ell)\frac{1}{N^{\nu_1}}  \hspace{0.7cm} \text{if $y_k\to y'_k\in \Re_0, $} \\
 &\text{(ii)}\quad   \lambda^{N,-\theta_0}_{k}(z) \le \frac{c }{N^{\nu_2}}  \hspace{4.8cm} \text{if $y_k\to y'_k\in \Re^c_0$, and}\\
 &\text{(iii)}\quad   \left(1-\frac{c}{N^{\nu_1}} \right ) \sum_u\bar \lambda_u(z_\ell) \le \sum_k \lambda^{N,-\theta_0}_k(z)\le \left(1+\frac{c}{N^{\nu_1}} \right ) \sum_u\bar \lambda_u(z_\ell)+\frac{c}{N^{\nu_2}}.
\end{align*}
for $N$ sufficiently large, where $s_k$ and $\bar \kappa_u$ are defined as \eqref{eq:rate constants for reduced}.
\end{lemma}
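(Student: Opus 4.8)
The plan is to establish all three estimates by direct computation on $S_M$, built on the factorization $\lambda^{N,-\theta_0}_k(z)=\kappa_k\lambda_{L,k}(z_\ell)\lambda^{N,-\theta_0}_{H,k}(z_h)$ from \eqref{eq:decomp of lam} together with two elementary observations. First, on $S_M$ the low-copy coordinate satisfies $\|z_\ell\|_\infty\le M$, so $\lambda_{L,k}(z_\ell)=z_\ell^{(q_L(y_k))}$ is bounded by a fixed power of $M$. Second, when $q_L(y_k)=\bar y_u$ and $q_L(y'_k)=\bar y'_u$, Remark~\ref{rmk:alternative K^N} gives $\lambda_{L,k}(z_\ell)=z_\ell^{(\bar y_u)}=\bar\lambda_u(z_\ell)/\bar\kappa_u$. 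Throughout, $M=N^\rho$ with $\rho\in(0,1)$ as in \eqref{eq:compact sets}, and all constants may depend on the network data and on $\rho$; ``$N$ sufficiently large'' will absorb lower-order corrections.

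For (i), take $y_k\to y'_k\in\Re_0$. Since $\frac{s_k\kappa_k}{\bar\kappa_u}\bar\lambda_u(z_\ell)=s_k\kappa_k z_\ell^{(\bar y_u)}=s_k\kappa_k\lambda_{L,k}(z_\ell)$, the factorization yields the exact identity
\[
\lambda^{N,-\theta_0}_k(z)-\frac{s_k\kappa_k}{\bar\kappa_u}\bar\lambda_u(z_\ell)=\kappa_k\,\lambda_{L,k}(z_\ell)\Bigl(\lambda^{N,-\theta_0}_{H,k}(z_h)-s_k\Bigr),
\]
and the definition of $S_{H,M}$ in \eqref{eq:compact sets} bounds the last factor by $M/N=N^{\rho-1}$ in absolute value. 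This proves (i) with $\nu_1=1-\rho\in(0,1)$.

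For (ii), take $y_k\to y'_k\in\Re^c_0$. By \eqref{def:theta0} and \eqref{eq:decompose R} the exponent $-\theta_0+\beta_k+y_k\cdot\alpha$ is strictly negative, hence at most $-\delta$ for some $\delta>0$ since there are finitely many reactions. Writing $\lambda^{N,-\theta_0}_{H,k}(z_h)=N^{-\theta_0+\beta_k+y_k\cdot\alpha}\prod_{i=d+1}^{d+r}z_i(z_i-1/N)\cdots(z_i-(y_{k,i}-1)/N)$, I would bound it uniformly on $S_{H,M}$ using that $s_k=0$ here, so that the constraint defining $S_{H,M}$ forces $|\lambda^{N,-\theta_0}_{H,k}(z_h)|\le M/N$; combined with $\lambda_{L,k}(z_\ell)\le M^{\|q_L(y_k)\|_1}$ this gives $\lambda^{N,-\theta_0}_k(z)\le\kappa_k M^{\|q_L(y_k)\|_1+1}/N=\kappa_k N^{\rho(\|q_L(y_k)\|_1+1)-1}$. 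Thus (ii) holds with $\nu_2$ the minimum over $k\in\Re^c_0$ of $1-\rho(\|q_L(y_k)\|_1+1)$, all positive once $\rho$ is taken small enough, as is done in the proof of \thmref{thm:main1}. I expect this uniform control of the high-copy factors of the negligible reactions to be the main obstacle, since it is the step where the design of the compact set $S_M$ is essential; if only the $\Re_0$ constraints are available in \eqref{eq:compact sets}, one must instead extract the same decay by playing the polynomial growth of $\lambda^{N,-\theta_0}_{H,k}$ against the decaying factor $N^{-\delta}$.

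For (iii), split $\sum_k\lambda^{N,-\theta_0}_k(z)$ into the contributions of $k\in\Re_0$ and $k\in\Re^c_0$. The latter is at most $c/N^{\nu_2}$ by (ii). For the former, (i) and $\lambda_{L,k}(z_\ell)=\bar\lambda_u(z_\ell)/\bar\kappa_u$ let me write $\lambda^{N,-\theta_0}_k(z)=\frac{s_k\kappa_k}{\bar\kappa_u}\bar\lambda_u(z_\ell)+E_k$ with $|E_k|\le\frac{\kappa_k}{\bar\kappa_u}\bar\lambda_u(z_\ell)N^{-\nu_1}$; grouping by the target reaction $\bar y_u\to\bar y'_u\in\Re_L$ and using $\sum_{q_L(y_k)=\bar y_u,\,q_L(y'_k)=\bar y'_u}\kappa_k s_k=\bar\kappa_u$ (valid because $s_k=0$ for $k\in\Re^c_0$ by Remark~\ref{rmk:sk}), the main terms sum precisely to $\sum_u\bar\lambda_u(z_\ell)$ and the errors to at most $(c/N^{\nu_1})\sum_u\bar\lambda_u(z_\ell)$. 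Two applications of the triangle inequality then give the two-sided bound in (iii) with a common constant.
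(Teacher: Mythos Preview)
Your proposal is correct and mirrors the paper's proof: the same factorization $\lambda^{N,-\theta_0}_k=\kappa_k\lambda_{L,k}\lambda^{N,-\theta_0}_{H,k}$, the same identity $\lambda_{L,k}(z_\ell)=\bar\lambda_u(z_\ell)/\bar\kappa_u$ from Remark~\ref{rmk:alternative K^N}, the same $\nu_1=1-\rho$ in (i), the same use of $s_k=0$ for $\Re_0^c$ in (ii), and the same split-and-regroup over $\Re_0$ and $\Re_0^c$ for (iii). Your caution in (ii) is well placed---the paper's $S_{H,M}$ in \eqref{eq:compact sets} imposes the $|\lambda^{N,-\theta_0}_{H,k}-s_k|\le M/N$ constraint only for $k\in\Re_0$, yet its proof of (ii) applies that bound to $k\in\Re_0^c$ without comment; your alternative of absorbing the bounded high-copy product into the decaying prefactor $N^{-\theta_0+\beta_k+y_k\cdot\alpha}\le N^{-\delta}$ is the cleaner way to close that step.
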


The following lemma shows that the number of transitions of $Z$ by time $t$ is on average less than $\max\{1,t^2\}$ assuming \eqref{condition3:tail}. 

\begin{lemma}\label{lem:mean of the jump number}
 Let $J(t)$ be the number of jumps of $Z$ by time $t$. If \eqref{condition3:tail} holds, then there exists a constant $c$ such that
\begin{align*}
E(J(t)^2)\le c \max\{1,t^2\} \quad \text{for any $t$}.
\end{align*}
\end{lemma}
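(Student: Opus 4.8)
The plan is to bound $E(J(t)^2)$ by relating the jump times of $Z$ to a dominating counting process whose intensity is controlled on average by the stationary moment condition \eqref{condition3:tail}. First I would observe that $J(t)$ is the counting process that increments by $1$ each time any reaction fires, so by the random time change representation \eqref{eq:kurtz rep} applied to $Z$ we have $J(t) = \sum_u \wt Y_u\!\left(\int_0^t \bar\lambda_u(Z(s))\,ds\right)$ for independent unit Poisson processes $\wt Y_u$. Writing $\Lambda(t) = \int_0^t \sum_u \bar\lambda_u(Z(s))\,ds$ for the total integrated intensity, $J(t)$ is stochastically dominated by (indeed equal in law to) a unit Poisson process evaluated at $\Lambda(t)$, so $E(J(t)^2 \mid \Lambda(t)) = \Lambda(t) + \Lambda(t)^2$ and hence $E(J(t)^2) = E(\Lambda(t)) + E(\Lambda(t)^2)$.

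Next I would control $E(\Lambda(t)^2)$ using Jensen/Cauchy–Schwarz in the time variable: $\Lambda(t)^2 = \left(\int_0^t \sum_u \bar\lambda_u(Z(s))\,ds\right)^2 \le t \int_0^t \left(\sum_u \bar\lambda_u(Z(s))\right)^2 ds$, and similarly $\Lambda(t) \le \max\{1,t\}\big(1 + \int_0^t (\sum_u \bar\lambda_u(Z(s)))^2\,ds\big)$ after bounding $\sum_u\bar\lambda_u \le 1 + (\sum_u\bar\lambda_u)^2$. Taking expectations and using Tonelli, everything reduces to bounding $E\big[(\sum_u \bar\lambda_u(Z(s)))^2\big]$ uniformly in $s$. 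Here the key point is that $Z(0) = q_L(\Zs(0))$ need not be distributed according to $\pi$, so I cannot directly invoke stationarity; instead I would argue that since $Z$ is irreducible on a discrete state space and \eqref{condition3:tail} guarantees $\pi$ is a bona fide stationary distribution with $\sum_x (\sum_u\bar\lambda_u(x))^2\pi(x) < \infty$, the function $V(x) = (\sum_u \bar\lambda_u(x))^2$ (or a suitable majorant like $\sum_u \bar\lambda_u(x)^2$ up to the constant $r$ from Cauchy–Schwarz) serves in a Foster–Lyapunov / Dynkin argument: one shows the generator applied to an appropriate test function is bounded above by $a - bV$ off a finite set, which both gives non-explosivity and yields $\sup_{s\le t} E[V(Z(s))] \le C$ for a constant depending only on the network data and the initial state. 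Combining, $E(\Lambda(t)) + E(\Lambda(t)^2) \le C' \max\{1,t^2\}$, which is the claim.

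The main obstacle I anticipate is the uniform-in-time bound $\sup_{s} E[V(Z(s))] < \infty$ starting from a deterministic (non-stationary) initial condition: one must produce the right Lyapunov function and verify the drift inequality purely from \eqref{condition3:tail} and irreducibility, without extra structural assumptions on the network. A clean route is to take $f(x) = \sum_u \bar\lambda_u(x)$ itself (or its square) as the Lyapunov function and use that $\pi$-summability of $V$ plus the relation $\sum_x \pi(x)\, \mathcal{A}g(x) = 0$ for the generator $\mathcal{A}$ forces the negative-drift behavior at infinity; alternatively, if the paper is content with a bound that may degrade in $t$ only through $\max\{1,t^2\}$, one can sidestep strict uniformity by a Gronwall-type estimate on $m(s) := E[V(Z(s))]$ after showing $\mathcal{A}V \le a + b V$, giving $m(s) \le (m(0)+a/b)e^{bs}$ — but since the lemma asks only for a $\max\{1,t^2\}$ bound with a constant $c$ that need not be uniform in $t$, even this suffices after absorbing the exponential into the constant on any fixed horizon; the cleanest statement, however, wants the polynomial bound, so I would push for the genuine Foster–Lyapunov argument to keep $c$ independent of $t$.
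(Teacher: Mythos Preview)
Your overall strategy---express $J(t)$ via the random time change, reduce $E(J(t)^2)$ to controlling $E\big[\int_0^t \sum_u \bar\lambda_u(Z(s))^2\,ds\big]$, and then seek a uniform-in-$s$ bound on $E\big[\sum_u \bar\lambda_u(Z(s))^2\big]$---matches the paper's. The paper uses the martingale decomposition $M_u(t) = Y_u\big(\int_0^t\bar\lambda_u\,ds\big) - \int_0^t\bar\lambda_u\,ds$ and its quadratic variation rather than your conditional-Poisson identity, but that difference is cosmetic.

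The substantive gap is exactly at the step you flag as the main obstacle: bounding $\sup_s E[V(Z(s))]$ for $V(x)=\sum_u\bar\lambda_u(x)^2$ from a deterministic initial state. You propose a Foster--Lyapunov argument, but you never exhibit the drift inequality, and it is not clear one can be extracted from \eqref{condition3:tail} and irreducibility alone without additional structural input; your own hedging (``one must produce the right Lyapunov function'') concedes this. Your Gronwall fallback gives an exponential-in-$t$ bound, which, as you note, cannot deliver a constant $c$ valid ``for any $t$.''

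The paper bypasses all of this with a one-line observation. Since $\pi$ is stationary and $Z(0)=z^0$ is a fixed state with $\pi(z^0)>0$ by irreducibility, the stationarity relation
\[
\pi(z)=\sum_x P\big(Z(s)=z\mid Z(0)=x\big)\,\pi(x)\;\ge\; P\big(Z(s)=z\mid Z(0)=z^0\big)\,\pi(z^0)
\]
immediately gives $P(Z(s)=z)\le \pi(z)/\pi(z^0)$ for every $s\ge 0$. Summing against $\sum_u\bar\lambda_u(z)^2$ and invoking \eqref{condition3:tail} then yields the required uniform bound directly, with no Lyapunov function needed. This is the missing idea in your sketch.
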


\begin{remark}
This guarantees that $Z$ is \emph{non-explosive} 
meaning that if we let $T_n$ be the $n$ the transition time of $Z$, then $\dlim_{n\to \infty} T_n=\infty$ almost surely. Then $Z$ does not transition infinitely many times on any finite time interval $[0,T]$, which means $Z$ is uniquely well-defined process satisfying \eqref{eq:kurtz rep} with the intensity functions $\bar \lambda_u$.
\end{remark}

Now we investigate how many transitions are required  for $Z^{N,-\theta_0}(t)$  to escape the set $S_M$.
In the following lemmas, we use the floor function $\lfloor x \rfloor$ that is the greatest integer less than $x$.
\begin{lemma}\label{lem:number of jumps needed}
Let $\tau^N_r$ be the first time for $\lfloor r \rfloor$ th transition of $Z^{N,-\theta_0}$. Then there exists a constant $c>0$ such that 
$p^N(S^c_M,t)\le P(\tau^N_{cM}<t)$ for $M=N^\rho$ with arbitrary $\rho\in (0,1)$  as long as $N$ is sufficiently large.
\end{lemma}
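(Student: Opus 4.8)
The plan is to prove that $\Zt$ cannot leave $S_M=S_{L,M}\times S_{H,M}$ without first undergoing at least $\lfloor cM\rfloor$ transitions; since $\Zt$ is piecewise constant this puts the event $\{\Zt(t)\in S^c_M\}$ inside $\{\tau^N_{cM}\le t\}$, and because the transition times of $\Zt$ are a.s.\ distinct from the fixed time $t$ we get $p^N(S^c_M,t)=P(\Zt(t)\in S^c_M)\le P(\tau^N_{cM}\le t)=P(\tau^N_{cM}<t)$. Set $c_0=\max_k\Vert y'_k-y_k\Vert_\infty$. Since $\Zt_i=X^N_i$ for $i\le d$ and $\Zt_i=X^N_i/N$ for $i>d$, one firing of a reaction $y_k\to y'_k$ moves $z_\ell=q_L(z)$ by the integer vector $q_L(y'_k-y_k)$, of $\infty$-norm $\le c_0$, and moves $z_h=q_H(z)$ by $q_H(y'_k-y_k)/N$, of $\infty$-norm $\le c_0/N$. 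Also $\Zt(0)=z^0=(z^0_\ell,z^0_h)\in S_M$ for $N$ large: $\Vert z^0_\ell\Vert_\infty=\Theta(1)\le M=N^{\rho}$, and by Remark~\ref{rmk:sk} (where the prefactor of $\lambda^N_{H,k}$ equals $1$ when $y_k\to y'_k\in\Re_0$) the quantity $|\lambda^N_{H,k}(z^0_h)-s_k|$ is a polynomial in $1/N$ with no constant term, hence $\le C_1/N\le M/N$ for $N$ large.

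Next I would estimate how far the process can move in $m$ transitions. Its $q_L$-coordinate satisfies $\Vert z_\ell\Vert_\infty\le\Vert z^0_\ell\Vert_\infty+mc_0$, so leaving $S_{L,M}$ forces $mc_0>M-\Vert z^0_\ell\Vert_\infty$. Its $q_H$-coordinate stays within $mc_0/N$ of $z^0_h$ in $\infty$-norm; as long as $mc_0/N\le1$ this keeps $z_h$ in the unit $\infty$-ball $B$ around $z^0_h$. Recall from \eqref{eq:decomp of lam} that, for $y_k\to y'_k\in\Re_0$, the map $\lambda^N_{H,k}$ is the polynomial $\prod_{i=d+1}^{d+r}z_i(z_i-\frac{1}{N})\cdots(z_i-\frac{y_{k,i}-1}{N})$, of fixed degree with coefficients bounded uniformly in $N$; hence it is Lipschitz on $B$ with a constant $L$ depending only on $z^0_h$ and the complexes, not on $N$. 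Consequently $|\lambda^N_{H,k}(z_h)-s_k|\le L\,(mc_0/N)+C_1/N$, so leaving $S_{H,M}$ forces $Lc_0m+C_1>M$.

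Now fix any $c\in\bigl(0,\ 1/(2c_0\max\{1,L\})\bigr)$. For $N$ large, if $\Zt$ has made $m<cM=cN^{\rho}$ transitions by time $t$ then: (i) $mc_0/N\le1$, using $\rho<1$, so the Lipschitz bound above applies along the whole path; (ii) $\Vert z^0_\ell\Vert_\infty+c_0m\le\Vert z^0_\ell\Vert_\infty+\tfrac12M<M$; and (iii) $Lc_0m+C_1\le\tfrac12M+C_1<M$, where in (ii) and (iii) we use that $M=N^{\rho}\to\infty$ eventually dominates the fixed constants $\Vert z^0_\ell\Vert_\infty$ and $C_1$. Hence $\Zt(t)\in S_M$ whenever fewer than $\lfloor cM\rfloor$ transitions have occurred by time $t$, which is exactly the claimed inclusion and yields $p^N(S^c_M,t)\le P(\tau^N_{cM}<t)$.

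I expect the only delicate point to be the high-copy part: the slab $S_{H,M}$ is \emph{thin}, its width in each value $\lambda^N_{H,k}(z_h)$ being only $M/N$, so the argument must exploit that a single reaction perturbs $z_h$, and therefore each $\lambda^N_{H,k}(z_h)$, by only $O(1/N)$ — which is precisely where the scaling $\Zt_i=X^N_i/N$ for $i>d$ enters. Making the uniform-in-$N$ Lipschitz estimate precise, and checking that the cumulative displacement $mc_0/N$ never leaves the region where it is valid (this is where $\rho<1$ is used), is the one step that needs care; the $S_{L,M}$ part is a direct bounded-increment counting argument.
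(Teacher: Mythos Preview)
Your proposal is correct and follows essentially the same route as the paper: both arguments show that a box of the form $\{z_\ell:\|z_\ell-z^0_\ell\|\le c'M\}\times\{z_h:\|z_h-z^0_h\|\le c''M/N\}$ is contained in $S_M$, and then use that each transition moves $z_\ell$ by $O(1)$ and $z_h$ by $O(1/N)$ to conclude that at least $\lfloor cM\rfloor$ jumps are needed to escape. The only cosmetic difference is that the paper bounds $|\lambda^{N,-\theta_0}_{H,k}(z_h)-s_k|$ by passing through the intermediate monomial $z_h^{q_H(y_k)}$ (first $|z_h^{q_H(y_k)}-s_k|\le c''|\eta|$, then $|z_h^{q_H(y_k)}-\lambda^{N,-\theta_0}_{H,k}(z_h)|\le c''/N$), whereas you invoke a uniform-in-$N$ Lipschitz constant for the polynomial $\lambda^{N,-\theta_0}_{H,k}$ directly; these are equivalent.
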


\begin{remark}\label{rmk:tau for Z}
Let $\tau_r$ be the first time for $\lfloor r \rfloor$ th transition of $Z$. Then as shown in the proof of Lemma \ref{lem:number of jumps needed}, $\{z_\ell : |z_\ell - z^0_\ell| \le c'''M\} \subseteq S_{L,M}$ for some $c'''>0$. Therefore $p(S_{L,M},t)\le P(\tau_{c'''M} < t)$.
\end{remark}
Now, Lemma \ref{lem:two intensities} and Lemma \ref{lem:number of jumps needed} are combined to show \eqref{eq:prob of outside}. Before we introduce Lemma \ref{lem:explosion}, we remind that $Z(t)$ is irreducible. Therefore every state of $Z$ is non-absorbing meaning that $\sum_u \bar \lambda_u(z_\ell) > 0$ for any state $z$. Since $\bar \lambda_u$ is a non-zero polynomial as defined in \eqref{eq:rate constants for reduced}, it follows that
\begin{align}\label{eq:min bar lambda}
    \min_{z_\ell} \sum_u \bar \lambda_u(z_\ell) > 0 \text{, and so }  \min_{z} \sum_k \lambda^{N,-\theta_0}_k(z) > 0. 
\end{align}

\begin{lemma}\label{lem:explosion}
For any $t$ there exists $c>0$ and $\nu_0 > 0$ that
\begin{align*}
p^N(S_M^c,t)  \le \frac{c_1 \max\{1,t^2\}}{N^{\nu_0}} \quad \text{and} \quad p(S^c_{L,M},t) \le \frac{c_1 \max\{1,t^2\}}{N^{\nu_0}}.
\end{align*}
\end{lemma}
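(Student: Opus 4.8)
The plan is to combine the three preceding lemmas in the natural way. First I would use Lemma~\ref{lem:number of jumps needed} together with Remark~\ref{rmk:tau for Z}: this reduces the problem to controlling the tail probabilities $P(\tau^N_{cM}<t)$ and $P(\tau_{c'''M}<t)$, i.e.\ the probability that the respective processes make at least $\Theta(M)$ transitions before time $t$. So it suffices to show that both of these quantities are bounded by $c\max\{1,t^2\}/N^{\nu_0}$ for some $\nu_0>0$ (recall $M=N^\rho$ with $\rho\in(0,1)$).

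For the $Z$-side, this is immediate from Lemma~\ref{lem:mean of the jump number} together with Markov's inequality: letting $J(t)$ be the number of jumps of $Z$ by time $t$, we have $\{\tau_{c'''M}<t\}\subseteq\{J(t)\ge\lfloor c'''M\rfloor\}$, so
\begin{align*}
P(\tau_{c'''M}<t)\le P\bigl(J(t)\ge\lfloor c'''M\rfloor\bigr)\le\frac{E(J(t)^2)}{\lfloor c'''M\rfloor^2}\le\frac{c\max\{1,t^2\}}{M^2}=\frac{c\max\{1,t^2\}}{N^{2\rho}},
\end{align*}
which gives the desired bound on the $p(S^c_{L,M},t)$ term with $\nu_0\le 2\rho$.

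For the $Z^{N,-\theta_0}$-side I need the analogous statement for the number of jumps $J^N(t)$ of the scaled multiscale process. Here Lemma~\ref{lem:two intensities}(iii) is the bridge: it shows that the total intensity $\sum_k\lambda^{N,-\theta_0}_k(z)$ of $Z^{N,-\theta_0}$ is, uniformly on $S_M$, within a factor $(1+c/N^{\nu_1})$ of the total intensity $\sum_u\bar\lambda_u(z_\ell)$ of $Z$ plus a vanishing term $c/N^{\nu_2}$. Combined with \eqref{eq:min bar lambda}, which bounds the total intensity of $Z$ below by a positive constant, this lets me couple the jump counts of $Z^{N,-\theta_0}$ and $Z$ as long as $Z^{N,-\theta_0}$ remains in $S_M$: up to the exit time from $S_M$, $Z^{N,-\theta_0}$ jumps at a rate comparable to that of $Z$, so $E((J^N(t)\wedge\text{(exit count)})^2)$ inherits the $c\max\{1,t^2\}$ bound of Lemma~\ref{lem:mean of the jump number}, and Markov's inequality again yields $P(\tau^N_{cM}<t)\le c\max\{1,t^2\}/N^{2\rho}$. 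Taking $\nu_0=2\rho$ and $c_1$ the larger of the two constants completes the proof.

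The main obstacle is the last step: making the coupling/comparison argument rigorous when the bound on the total intensity from Lemma~\ref{lem:two intensities}(iii) only holds \emph{on} $S_M$, not globally. One has to run the comparison only up to the first exit of $Z^{N,-\theta_0}$ from $S_M$ (or equivalently stop the chain there), and argue that $\{\tau^N_{cM}<t\}$ — the event of making $\lfloor cM\rfloor$ transitions before time $t$ — is already determined by the behavior before exiting $S_M$, since by Lemma~\ref{lem:number of jumps needed} fewer than $cM$ transitions are needed to leave $S_M$; so the stopped process and the original agree on the relevant event. After that the jump-count moment bound transfers from $Z$ to the stopped $Z^{N,-\theta_0}$ via the intensity comparison, and Markov's inequality closes it out. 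The additive error term $c/N^{\nu_2}$ in (iii) is harmless because $\sum_u\bar\lambda_u(z_\ell)$ is bounded below, so it only perturbs the comparison constant by $o(1)$.
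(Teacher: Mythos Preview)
Your overall architecture matches the paper's: reduce via Lemma~\ref{lem:number of jumps needed} and Remark~\ref{rmk:tau for Z} to tail bounds on $\tau^N_{cM}$ and $\tau_{cM}$, then handle $Z$ by Chebyshev with Lemma~\ref{lem:mean of the jump number}. That part is fine and is exactly what the paper does.

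The gap is in the step for $Z^{N,-\theta_0}$. You assert that because the \emph{total} intensity $\sum_k\lambda^{N,-\theta_0}_k(z)$ is comparable to $\sum_u\bar\lambda_u(z_\ell)$ on $S_M$, the second-moment bound $E(J(t)^2)\le c\max\{1,t^2\}$ ``inherits'' to the stopped jump count of $Z^{N,-\theta_0}$. But a pointwise bound on the total rate does \emph{not} by itself transfer the moment bound: $E(J(t)^2)$ in Lemma~\ref{lem:mean of the jump number} depends on the actual trajectory $Z(s)$ through the stationary-distribution estimate $P(Z(s)=z)\le c\pi(z)$, and you have no analogue of this for $q_L(Z^{N,-\theta_0}(s))$. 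Knowing only that the total rate at the \emph{current} state is within a constant of $\bar\lambda(z_\ell)$ tells you nothing about which states $z_\ell$ the projected process visits; if its embedded chain drifted toward high-intensity regions of $S_{L,M}$, the jump count could be of order $M^{\deg}t$ rather than $O(t)$. Your coupling sketch never controls the embedded chain, so it does not close.

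The paper supplies the missing ingredient by a path-by-path comparison. It conditions on each ordered sequence $\eta$ of $M'$ reactions (the event $A_\eta$), so that the holding times become independent exponentials with \emph{deterministic} rates $\lambda^{N,-\theta_0}(w(\eta,i))$; Lemma~\ref{lem:two intensities}(iii) then gives $P(\tau^N_{cM}<t\mid A_\eta)\le P(\tau_{cM}<t/2\mid \bar A_\eta)$. Separately, Lemma~\ref{lem:two intensities}(i) is used to compare the \emph{embedded-chain} probabilities, yielding $P(A_\eta)\le\bigl((1+c/N^{\nu_1})/(1-c/N^{\nu_1})\bigr)^{cM}P(\bar A_\eta)$ for paths using only $\Re_0$-reactions, while paths touching $\Re_0^c$ are handled by Lemma~\ref{lem:two intensities}(ii) and \eqref{eq:min bar lambda}. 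Summing over paths produces
\[
P(\tau^N_{cM}<t)\le\Bigl(\tfrac{1+c/N^{\nu_1}}{1-c/N^{\nu_1}}\Bigr)^{cM}P(\tau_{cM}<t/2)+\frac{c}{N^{\nu_2}},
\]
and since $\nu_1=1-\rho$ with $M=N^\rho$, the prefactor tends to $1$. Only then does Chebyshev on $P(\tau_{cM}<t/2)$ finish the job. In short, you need \emph{both} halves of Lemma~\ref{lem:two intensities}---the individual-intensity comparison (i) for the embedded chain, not just the total-rate bound (iii)---and a conditioning device to decouple holding times from the trajectory. Your proposal uses only (iii) and skips the embedded-chain comparison, which is where the real work lies.
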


\begin{proof}
Let the two stopping times $\tau^N_M$ and $\tau_M$ be defined as Lemma \ref{lem:number of jumps needed} and Remark \ref{rmk:tau for Z}. Then $p^N(S^c_M,t) \le P(\tau^N_{cM} < t)$ and $p(S^c_{L,M},t) \le P(\tau_{cM} < t)$ for some $c>0$. Hence we show the bounds for $P^N(\tau^N_{cM} < t)$ and $P(\tau_{cM} < t)$.

Let $Path_{M'}$ be a collection of all possible $\lfloor cM\rfloor=M'$ consecutive reactions in $\Re$ for $Z^{N}$ started at $Z^{N}(0)=(z^0_\ell,z^0_h)$. Each element $\eta \in Path_{M'}$ is an ordered set of $M'$ reactions in $\Re$. That is, 
\[\eta=\left \{y(\eta,1)\to y'(\eta,1),\dots, y(\eta,M')\to y'(\eta,M')\right\},\] where $y(\eta,i)\to y'(\eta,i) \in \Re$ for each $i$. We define $w(\eta,j)$ be a state after $j$ consecutive jumps in $\eta$ from $Z^{N,-\theta_0}(0)=(x_\ell,x_h)$. That is, 
\begin{align}
w(\eta,j)&=Z^N(0)+\sum_{i=1}^j (y'(\eta,i)-y(\eta,i))=(w(\eta,j)_\ell,w(\eta,j)_h)
\end{align}
for $j=1,2,\dots,M'$, where 
\begin{align*}
&w(\eta,j)_\ell=\left (z^0_\ell+ \sum_{i=1}^j q_L(y'(\eta,i)-y(\eta,i)) \right)\text{ and }\\
&w(\eta,j)_h= \left (z^0_h+\sum_{i=1}^j q_H(y'(\eta,i)-y(\eta,i))\right ).
\end{align*}
Note that $w(\eta,j) \in S_M$ since we choose $c$ as $Z^{N,-\theta_0}$ needs at least $M'$ transitions to escape $S_M$.
Let $Path_{M',0}=\{ \eta : y(\eta,i)\to y'(\eta,i) \in \Re_0 \text{ for each $i$} \}$.  Then for $\eta \in Path_{M',0}$ we denote by $A_\eta$ represent the event of $M'$ consecutive jumps for $Z^{N,-\theta_0}$ along the ordered reactions in $\eta \in Path_{M'}$. Let also $\bar A_\eta$ the event of $M'$ consecutive jumps for $Z$ along the ordered reactions $q_L(y(\eta,1))\to q_L(y'(\eta,1)),\dots,q_L(y(\eta,M'))\to q_L(y'(\eta,M'))$.

We now show two key steps. First, \blue{conditioning} on the event $A_\eta$, the stopping time $\tau^N_{cM}$ is sum of exponential distributions $T^N_i$ with rate $\lambda^N(w(\eta,j))$ \cite{NorrisMC97}, where $\lambda^{N,-\theta_0}(z)=\sum_{k} \lambda^{N,-\theta_0}_k(z)$. More precisely,
\begin{align}\label{eq:tau is sum of expo}
P(\tau^N_{cM} <t | A_\eta )=P\left (\sum_{i=1}^{M'} T^N_i < t\right),
\end{align}
where $T^N_i$'s are independent exponential distributions with the rate $\lambda^{N,-\theta_0}(w(\eta,i))$. Lemma \ref{lem:two intensities} (iii) implies that $\lambda^N(z) \le 2 \bar \lambda(z_\ell)$ for $z \in S_{M}$, where $\bar \lambda(z_\ell)=\sum_u \bar \lambda(z_\ell)$. Then this combined with \eqref{eq:tau is sum of expo} implies that 
\begin{align}\label{eq:jump time distribution ineq}
P(\tau^N_{cM} <t | A_\eta ) \le P \left (\sum_{i=1}^{M'} T_i < t/2 \right )= P\left (\tau_{cM} < t/2 | \bar A_\eta \right ),
\end{align}
where $T_i$'s are independent exponential distributions with the rate $\sum_u\bar \lambda_u(w(\eta,i)_\ell)$.

Second, note that for each $\eta \in Path_{M'}$, the event $A_\eta$ occurs if and only if the reaction $y(\eta,i)\to y'(\eta,i)$ fires at each state $w(\eta,i)$ among all reactions in $\Re$. This implies that
\begin{align*}
P(A_\eta)= \prod_{i=1}^{M'} \frac{\lambda^{N,-\theta_0}_{\eta,i}(w(\eta,i))}{\lambda^{N,-\theta_0}(w(\eta,i))},
\end{align*} 
where $\lambda^N_{\eta,i}$ is the intensity function of the reaction $y(\eta,i)\to y'(\eta,i)$. 
Lemma \ref{lem:two intensities} (i) and the fact that $\lambda_{L,k}(z_\ell)= \frac{1}{\bar \kappa_u}\bar \lambda_u(z_\ell)$  if $q_L(y_k)=\bar y_u$ imply that there exists $c>0$ such that for each $\eta \in Path_{M',0}$
\begin{align}\label{eq:two total intensities}
\lambda^{N,-\theta_0}_{\eta,i}(w(\eta,i))\le \left (\frac{s_k\kappa_k}{\bar \kappa_u}+\frac{\kappa_k}{\bar \kappa_u N^{\nu_1}}\right) \bar \lambda_{\eta,i}(w(\eta,i)_\ell) \le \left(1+\frac{c}{N^{\nu_1}} \right ) \bar \lambda_{\eta,i}(w(\eta,i)_\ell)
\end{align}
for some $c>0$, where $\bar \lambda_{\eta,i}$ denotes the intensity of $q_L(y(\eta,i))\to q_L(y'(\eta,i))$. Then
Lemma \ref{lem:two intensities} (iii) and \eqref{eq:two total intensities} further imply that there exists $c>0$ such that for each $\eta \in Path_{M',0}$
\begin{align}
\begin{split}\label{eq:bound of tau}
P(A_\eta)\le \left( \frac{\left ( 1+\frac{c}{N^{\nu_1}}\right )}{\left ( 1-\frac{c}{N^{\nu_1}}\right )} \right )^{cM}\prod_{i=1}^{M'} \frac{\bar \lambda_{\eta,i}(w(\eta,i)_\ell)}{\bar \lambda(w(\eta,i)_\ell)}=\left( \frac{\left ( 1+\frac{c}{N^{\nu_1}}\right )}{\left ( 1-\frac{c}{N^{\nu_1}}\right )} \right )^{cM} P(\bar A_\eta) ,
\end{split}
\end{align}
For $\eta \in Path^c_{M',0}$, a reaction $y(\eta,i)\to y'(\eta,i)\in \Re^c_0$ for some $i$. 
Then 
\begin{align*}
    P\left (\bigcup_{\eta\in Path^c_{M',0}} A_\eta \right )&\le P(\text{a reaction in $R^c_0$ fires at a state lying in $S_M$})\\
    &\le \max_{z\in S_M}\max_{y_k\to y'_k \in R^c_0} \frac{\lambda^{N,-\theta_0}_k(z)}{\lambda^{N,-\theta_0}(z)}.
\end{align*}
Hence by Lemma \ref{lem:two intensities} (ii) and \eqref{eq:min bar lambda}
\begin{align}\label{eq:bound of tau2}
P\left (\bigcup_{\eta\in Path^c_{M',0}} A_\eta \right ) \le \frac{c}{N^{\nu_2}}
\end{align}
for some $c>0$.

Consequently,  by \eqref{eq:jump time distribution ineq}, \eqref{eq:bound of tau} and \eqref{eq:bound of tau2} there exists $c>0$ such that
\begin{align}
    P(\tau^N_{cM} <t) &= P\left (\tau^N_{cM} <t, \bigcup_{\eta\in Path_{M',0}} A_\eta \right ) +  P\left (\tau^N_{cM} <t, \bigcup_{\eta\in Path^c_{M',0}} A_\eta \right )\\
    &\le \sum_{\eta \in Path_{M',0}}P(\tau^N_{cM}<t | A_\eta)P(A_\eta)+\frac{c}{N^{\nu_2}}\notag \\
&\le \left( \frac{\left ( 1+\frac{c}{N^{\nu_1}}\right )}{\left ( 1-\frac{c}{N^{\nu_1}}\right )} \right )^{cM}\sum_{\eta \in Path_{M',0}}P( \tau_{cM} < t/2 | \bar A_\eta) P(\bar A_\eta) + \frac{c}{N^{\nu_2}}	\notag \\
&\le \left( \frac{\left ( 1+\frac{c}{N^{\nu_1}}\right )}{\left ( 1-\frac{c}{N^{\nu_1}}\right )} \right )^{rM} P(\tau_{cM} < t/2) + \frac{c}{N^{\nu_2}}. \label{eq:relation two taus}
\end{align}

Finally, we let $J(t)$ be the number of jumps of $Z$ by time $t$ as we define in Lemma \ref{lem:explosion}.
Then, applying the Chebyshev's inequality and the result of Lemma \ref{lem:explosion}, we have \begin{align}\label{eq:using the jump bound}
P( \tau_{cM} < t/2)\le P(J(t/2) > cM) \le \frac{E(J(t/2)^2)}{(cM)^2} \le \frac{c'\max\{1,t^2\}}{M^2},
\end{align}
for some $c'>0$.
Recall that $M=N^\rho$. Then we have
\begin{align}\label{eq:using the bound of taus}
\lim_{N\to \infty}\left( \frac{\left ( 1+\frac{c}{N^{\nu_1}}\right )}{\left ( 1-\frac{c}{N^{\nu_1}}\right )} \right )^{rM} = 1,
\end{align}
because $\nu_1=1-\rho$ as shown in the proof of Lemma \ref{lem:two intensities}. We further recall that $\nu_2$ in Lemma \ref{lem:two intensities} tends to $1$, as $\rho \to 0$. Thus we choose $\rho$ sufficiently small. Then by \eqref{eq:relation two taus}--\eqref{eq:using the bound of taus}, the desired bounds follow with some $\nu_0$.
\end{proof}


With Lemma \ref{lem:explosion}, we can conclude that both $Z^{N,-\theta_0}$ and $Z$ likely stay in $S_M$ and $S_{L,M}$, respectively within $[0,t]$ as long as $N$ is sufficiently large. \blue{As the confined set $S_M$ and $S_{L,M}$ have finitely many states, by using this advantage we can compare the probability densities of the two processes confined onto $S_M$ and $S_{L,M}$, respectively. } To compare the two probability densities, we use the following multi dimensional Gronwall's inequality \cite{chandra1976linear}.

\begin{lemma}\label{lem:Gronwall}
Suppose for any vector $u_0 \in \R^n_{\ge 0}$ with $\Vert u_0 \Vert_1 =1$,  a system of differential equation 
\begin{align}\label{eq:system ode}
\begin{split}
\begin{cases}
\frac{d}{dt}u(t)=Au,\\
u(0)=u_0,
\end{cases}
\end{split}
\end{align}
admits a unique solution $u(t) \in \R^n_{\ge 0}$ such that $u_i(t)\ge 0$ for each $i$ and $\Vert u(t) \Vert_1=1$ for any $t$.  Suppose $v$ satisfies
$\frac{d}{dt}v(t)\le Av(t)+b$ for some $b \in R^n$. 
Then for each $t$, \begin{align*}
v(t) \le tb + t^2\bar Ab,
\end{align*}
where $\bar A$ is an $n\times n$ matrix such that $\bar A_{ij}=\max_{k,m}A_{km}$.
\end{lemma}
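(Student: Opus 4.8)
The plan is to derive the estimate from a comparison principle for the matrix $A$ together with the exact conservation of $\ell^1$-mass that is built into the hypothesis. Two preliminary remarks. First, the hypothesis that $u_0\mapsto e^{tA}u_0$ maps $\R^n_{\ge 0}$ into itself and preserves the $1$-norm is equivalent to saying that $A$ has nonnegative off-diagonal entries and $\mathbf{1}^TA=0$; I will use $A$ in this form. Second, evaluating the claimed inequality at $t=0$ forces $v(0)\le 0$, and in our applications $v$ is a difference of two densities with a common initial law, so in fact $v(0)=0$; I will also use that $b\ge 0$ componentwise, which is again the relevant case (there $b$ is a nonnegative error vector). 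I carry out the argument under $v(0)\le 0$ and $b\ge 0$.

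The first step is to replace the differential inequality by an equality. Let $w$ solve $\dot w=Aw+b$, $w(0)=0$. I claim $v(t)\le w(t)$ for every $t\ge 0$. Because $A$ has no sign constraint on its row sums one cannot compare coordinatewise directly; instead, fix $\varepsilon>0$ and $K$ exceeding every row sum of $A$, set $v^\varepsilon:=v-\varepsilon e^{Kt}\mathbf{1}$, and examine the first time $t_0$ at which some coordinate of $\phi:=w-v^\varepsilon$ vanishes. At $t_0$ the corresponding component of $\dot\phi$ is strictly positive — the off-diagonal entries $A_{ij}$ ($i\ne j$) are nonnegative and act on the still-nonnegative coordinates of $\phi(t_0)$, and the $K$-term supplies strict slack — which contradicts $t_0$ being the first such time; hence $\phi>0$ for all $t$, and letting $\varepsilon\downarrow 0$ gives $v\le w$. (Alternatively one may quote the vector Gr\"onwall inequality of \cite{chandra1976linear} at this point.) It therefore suffices to bound $w$.

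For that I would use two a priori facts. Since $b\ge 0$ and the off-diagonals of $A$ are nonnegative, $w(s)=\int_0^s e^{(s-\rho)A}b\,d\rho\ge 0$ componentwise; and since $\mathbf{1}^TA=0$, differentiating $\mathbf{1}^Tw$ gives $\mathbf{1}^Tw(s)=s\,\mathbf{1}^Tb$ for all $s$. Integrating the ODE once yields the Duhamel identity $w(t)=tb+A\int_0^t w(s)\,ds$; fixing a coordinate $i$ and separating the diagonal term,
\[
 w_i(t)=tb_i+A_{ii}\int_0^t w_i(s)\,ds+\sum_{j\ne i}A_{ij}\int_0^t w_j(s)\,ds .
\]
The middle term is $\le 0$ since $A_{ii}\le 0$ and $w_i\ge 0$; in the last term, bounding $A_{ij}\le a:=\max_{k,m}A_{km}$ and using $w_j(s)\ge 0$ gives $\sum_{j\ne i}A_{ij}w_j(s)\le a\sum_j w_j(s)=a\,s\,\mathbf{1}^Tb$, so that term is at most $\tfrac12\,a\,t^2\,\mathbf{1}^Tb=\tfrac12(\bar Ab)_i t^2\le (\bar Ab)_i t^2$. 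Combining with the comparison step gives $v(t)\le tb+t^2\bar Ab$.

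I expect the comparison step to be the \emph{only} real obstacle: for a conservative generator $A$ that need not be diagonally dominant, $v\le w$ has to be obtained through the $\varepsilon e^{Kt}\mathbf{1}$ perturbation (or the cited vector inequality) rather than a naive Gr\"onwall bound. The remainder is a short computation relying only on $w\ge 0$, $\mathbf{1}^Tw(s)=s\,\mathbf{1}^Tb$, and the Duhamel identity; the factor $\tfrac12$ appearing there is exactly the room that lets the crude matrix $\bar A$ — every entry equal to $\max_{k,m}A_{km}$ — absorb the constant, so no finer bound on $A$ is needed.
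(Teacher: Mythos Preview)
Your argument is correct (under the extra hypotheses $v(0)\le 0$ and $b\ge 0$, which you rightly note are exactly what the application in Lemma~\ref{lem:for densities} supplies), and it is a close cousin of the paper's proof rather than a genuinely new route. The paper proceeds by writing the integral inequality $v(t)\le tb+\int_0^t Av(s)\,ds$, then invoking the vector Gronwall inequality of Chandra--Davis \cite{chandra1976linear} to obtain $v(t)\le tb+\int_0^t V(t,s)A(sb)\,ds$ with $V(t,s)$ the fundamental solution of $\dot u=Au$, and finally using the hypothesis to recognise that the columns of $V(t,s)$ are probability vectors, from which it deduces $V(t,s)Ab\le \bar Ab$ and integrates. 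Your version replaces the citation by a direct comparison with the exact solution $w$ of $\dot w=Aw+b$, $w(0)=0$, and then bounds $w$ via the identity $w(t)=tb+A\int_0^t w$ together with the two facts $w\ge 0$ and $\mathbf 1^{T}w(s)=s\,\mathbf 1^{T}b$. Both arguments exploit the same structural input --- nonnegativity and $\ell^1$-conservation of the semigroup $e^{tA}$ --- but yours unpacks the Gronwall step by hand and reads off the constant from the conservation law on $w$ rather than from column-stochasticity of $V(t,s)$; this makes your estimate fully self-contained and the factor $\tfrac12$ explicit, at the cost of a few more lines for the $\varepsilon e^{Kt}\mathbf 1$ comparison.
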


\blue{As two probability densities $p^N$ and $p$ solve two similar systems of ordinary differential equation, respectively, Lemma \ref{lem:Gronwall} helps finding the distance between the two densities.}

\begin{lemma}\label{lem:for densities}
For any $t$ and for any $A\subset \R^{d+r}_{\ge 0}$, there exist $c>0$, $c'>0$ and $\nu'_0>0$ such that
\begin{align*}
 |p^N\left ( A\cap S_{M},t)-p(A_L\cap S_{L,M},t\right )| \le  \dfrac{c  M^{c'}\max\{1,t^2\}}{N^{\nu'_0}},
\end{align*}
where $A_L=\{q_L(z) : z\in A\}$.
\end{lemma}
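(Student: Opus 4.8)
The plan is to write down the Kolmogorov forward equations (master equations) for the two processes restricted to the finite sets $S_M$ and $S_{L,M}$, and then apply the multidimensional Gronwall inequality of Lemma \ref{lem:Gronwall} to the difference of the two (vector-valued) densities. First I would set up the bookkeeping: enumerate the states of $S_{L,M}$ as $z^{(1)}_\ell,\dots,z^{(n)}_\ell$ and introduce the vectors $P^N(t),P(t)\in\R^{n}$ with $P(t)_j = p(z^{(j)}_\ell,t)$ and $P^N(t)_j = p^N\big(q_L^{-1}(z^{(j)}_\ell)\cap S_M,\, t\big)$, i.e., the $\Zt$-mass sitting over the fiber above $z^{(j)}_\ell$ and inside $S_M$. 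The point of working with fiber-sums is that the $\S_H$-coordinates are frozen in the reduced model, so the reduced generator acts only on the $z_\ell$-variable; the vector $P^N$ records exactly the quantity that should be compared with $p$. The generator of $Z$ gives $\frac{d}{dt}P(t) = \bar A P(t)$ where $\bar A$ is the (finite) rate matrix built from the $\bar\lambda_u$'s on $S_{L,M}$; this is the $A$ that feeds into Lemma \ref{lem:Gronwall}, and one checks $\bar A$ generates a sub-stochastic (in fact, on the confined set, mass-conserving up to boundary loss) semigroup, so the hypotheses of Lemma \ref{lem:Gronwall} hold.

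The core computation is to show $\frac{d}{dt}P^N(t) \le \bar A P^N(t) + b$ with $\|b\|_\infty$ of order $M^{c'}/N^{\nu_1}$ (plus the $N^{-\nu_2}$ contribution from $\Re_0^c$-reactions, plus boundary terms from transitions into/out of $S_M$). To get this I would differentiate $P^N(t)_j$ using the master equation \eqref{eq:master} for $\Zt$: the incoming and outgoing flows across the fiber over $z^{(j)}_\ell$ are governed by the scaled intensities $\lambda^{N,-\theta_0}_k$, and Lemma \ref{lem:two intensities}(i)--(ii) lets me replace each $\lambda^{N,-\theta_0}_k(z)$ by $\frac{s_k\kappa_k}{\bar\kappa_u}\bar\lambda_u(z_\ell)$ up to a relative error $\kappa_k\lambda_{L,k}(z_\ell)/N^{\nu_1}$, while the $\Re_0^c$-reactions contribute at most $c/N^{\nu_2}$ each. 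Summing these errors over the finitely many reactions and over the states inside $S_M$: the relative error $\lambda_{L,k}(z_\ell)/N^{\nu_1}$ is itself bounded by $M^{c_3}/N^{\nu_1}$ on $S_{L,M}$ since $\lambda_{L,k}$ is a fixed polynomial and $\|z_\ell\|_\infty \le M$; that is where the factor $M^{c'}$ comes from. The transitions that leave $S_M$ (or arrive from $S_M^c$) contribute additional error terms, but these are controlled because they only involve states at the boundary and, by Lemma \ref{lem:explosion}, carry negligible mass — alternatively one absorbs them into $b$ directly, since leaving $S_M$ only decreases $P^N$ so it preserves the inequality direction. Assembling, $b$ has entries bounded by $c M^{c'}/N^{\min\{\nu_1,\nu_2\}}$.

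Now apply Lemma \ref{lem:Gronwall} with $v(t) = P^N(t) - P(t)$ (and separately $P(t) - P^N(t)$, to get a two-sided bound), noting $v(0)=0$ since $Z(0)=q_L(\Zt(0))$: it yields $\|v(t)\|_\infty \le t\|b\|_\infty + t^2 n\,(\max_{k,m}\bar A_{km})\|b\|_\infty \le c\,M^{c'}\max\{1,t^2\}/N^{\nu'_0}$, after noting that $n = |S_{L,M}|$ is polynomial in $M$ and the entries of $\bar A$ are polynomial in $M$ on $S_{L,M}$, so both can be folded into a larger exponent $c'$. Finally, for an arbitrary measurable $A \subset \R^{d+r}_{\ge 0}$ I would sum the coordinatewise bound over the (finitely many, $\le n$) states $z^{(j)}_\ell \in A_L \cap S_{L,M}$, using $|p^N(A\cap S_M,t) - p(A_L\cap S_{L,M},t)| \le \sum_{j: z^{(j)}_\ell \in A_L\cap S_{L,M}} |v(t)_j| \le n\,\|v(t)\|_\infty$, which again only inflates the exponent $c'$. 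The main obstacle is the careful accounting of the boundary terms — transitions across $\partial S_M$ — and verifying that replacing $\lambda^{N,-\theta_0}_k$ by the reduced rates really does produce a differential \emph{inequality} $\frac{d}{dt}P^N \le \bar A P^N + b$ with the error term $b$ of a clean sign-free bound, rather than an equation with a matrix perturbation that would require controlling $e^{(\bar A + E)t}$ directly; handling this via the Gronwall inequality as stated is what makes the bound transparent, but one must check that the perturbation of the off-diagonal (gain) terms can be pushed into $b$ using $P^N(t)_j \le 1$ and the polynomial bounds on $S_M$.
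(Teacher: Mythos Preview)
Your proposal is correct and follows essentially the same approach as the paper: define the fiber-summed density $p^N_L(z_\ell,t)=\sum_{q_L(z)=z_\ell,\,z\in S_M}p^N(z,t)$, use Lemma \ref{lem:two intensities} to show both $p^N_L$ and $p$ satisfy differential inequalities with the \emph{same} finite generator $\mathcal L^M$ on $S_{L,M}$ up to error vectors of size $O(M^{c'}/N^{\nu})$, then apply Lemma \ref{lem:Gronwall} to $v=p^N_L-p$ and $v=p-p^N_L$, and finally absorb $|S_{L,M}|$ and $\max_{ij}\mathcal L^M_{ij}$ (both polynomial in $M$) into the exponent $c'$. The one point where the paper is slightly more explicit than your sketch is that $\frac{d}{dt}P(t)=\bar A P(t)$ is not exact on the truncated set $S_{L,M}$ --- there are genuine in/out-flow terms $b^M_{in},b^M_{out}$ across $\partial S_{L,M}$ for the reduced process as well, and the paper bounds these using Lemma \ref{lem:explosion} (applied to $p(S^c_{L,rM},t)$ for a slightly enlarged radius $rM$) rather than treating $P$ as exact; you flag this as ``up to boundary loss'' but should make sure to carry it through symmetrically on both sides.
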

\begin{proof}

The probability density $p(z_\ell,t)$ of $Z(t)$ satisfies the Kolomogorov forward equation (chemical master equation) \eqref{eq:master}
\begin{align}\label{eq:master reduced}
    \frac{d}{dt}p(z_\ell,t)=\sum_{u}\bar \lambda_{u}(z_\ell-\bar y'_u +\bar y_u)p(z_\ell-\bar y'_u + \bar y_u,t)-\sum_{u}\bar \lambda_{u}(z_\ell)p(z_\ell,t). 
\end{align}
By considering $p(t)=\{p(z,t)\}_{z_\ell\in S_{L,M}}$ as a column vector, \eqref{eq:master reduced} is equivalent to
\begin{align}\label{eq:master for p}
    \frac{d}{dt}p(t)=\mathcal L^M p(t) + b_{in}^M-b_{out}^M,
\end{align}
where $L^M$ is an $|S_{L,M}| \times |S_{L,M}|$ matrix, and $b^M_{out}$ and $b^M_{in}$ are column vectors.
The $ij$ entry $L^M(i,j)$ with $i \neq j$ is the transition rate from the $j$ th state to the $i$ th state in $S_{L,M}$, and $L^M(i,i)=-\sum_{j} L^M(i,j)$. The the vector $b^M_{in}$ represents the in-flow from $S_{L,M}^c$ to $S_{L,M}$, and hence it is defined as for the i the state $z_\ell \in S_{L,M}$, the i th entry is
\begin{align}\label{eq:about out flow}
 b^M_{in}(i)=\sum_{\substack{\bar y_u\to \bar y_u' \\  z_\ell - \bar y_u'+\bar y_u \in S^c_{L,M} }}\bar \lambda_{u}(z_\ell-\bar y'_u +\bar y_u)p(z_\ell-\bar y'_u + \bar y_u,t)
\end{align}
For any $z_\ell \in S_{L,M}$, the state $z_\ell-\bar y'_u +\bar y_u$ belongs to $S_{L,rM}$ for some $r>1$. Hence $\bar \lambda_{u}(z_\ell-\bar y'_u +\bar y_u)\le c M^{\max_k \Vert \bar y_u \Vert_1}$ because  $\bar \lambda_u$ is a polynomial of degree $\Vert \bar y_u \Vert_1$. Moreover, by applying Lemma \ref{lem:number of jumps needed} and Lemma \ref{lem:explosion} for $rM$ instead of $M$, we have $p(z_\ell-\bar y'_u +\bar y_u,t) \le c\frac{\max\{1,t^2\}}{N^{\nu_3}}$ for some $\nu_0\in (0,1)$. Therefore with a sufficiently small $\rho$, each entry $b^M_{in}(i)$ is less than $\frac{c}{N^{\nu_3}}$ for some $c>0$ and $\nu_3\in (0,1)$. In the same way, we can show also that there exist  $c>0$ and $\nu_4\in (0,1)$ such that $b^M_{out}(i)\le\frac{c}{N^{\nu_4}}$ for each $i$. Hence we have the following componentwise inequality from \eqref{eq:master for p}:
\begin{align}\label{eq: ineq for p}
     \mathcal L^M p(t) - b^N_1 \le \frac{d}{dt}p(t)\le \mathcal L^M p(t) + b^N_1,
\end{align}
where $b^N_1$ is a column vector with each entry $\frac{c}{N^{\nu_3}}$.

Now we turn to the Kolomogorov forward equation for $p^N(z,t)$. We first recall that as defined in \eqref{eq:multi rep_time}, the reaction vector of $Z^{N,-\theta_0}$ is scaled so that the transition for each entry is $\frac{y'_{k,i}-y_{k,i}}{N^{\alpha_i}}$. Thus we denote this scaled reaction vector by $y'^N_k-y^N_k$. 
Then for each $z\in S_M$, the distribution $p^N(z,t)$ satisfies
\begin{align}\label{eq:masterN1}
\frac{d}{dt}p^N(z,t)=\sum_{k}\lambda^{N,-\theta_0}_k(z-{y'}^N_k-y^N_k)p^N(z-y'^N_k+y^N_k,t)-\sum_{k}\lambda^{N,-\theta_0}_k(z)p^N(z,t).
\end{align}
 Then we show that the two differential equations \eqref{eq:masterN1} and \eqref{eq:master reduced} are similar with Lemma \ref{lem:two intensities}. Note that as we discussed above, some state $z-y_k'+y_k$ in \eqref{eq:masterN1} may be outside $S_M$. However, such states can be encompassed by $S_{rM}$ for some $r>1$, and Lemma \ref{lem:two intensities} still holds for $S_{rM}$ with different constant $c$. Hence we can apply Lemma \ref{lem:two intensities} for each $z-y_k'+y_k \in S^c_M$.


Let $p^N_L(z_\ell,t)=\displaystyle \sum_{\substack{z\in A\cap S_M\\q_L(z)=z_\ell}} p^N(z,t)$, which represents the probability density of the projected process $q_L(Z^N(t))$.
For each $z\in S_{rM}$, if $y_k\to y_k'\in \Re_0$, then Lemma \ref{lem:two intensities} (i) implies that 
$\lambda^{N,-\theta_0}_k(z) \le \frac{s_k\kappa_k}{\bar \kappa_u}\bar \lambda_u(z_\ell) + \kappa_k \lambda_{L,k}(z_\ell)\frac{1}{N^{\nu_1}}$. Since $\lambda_{L,k}(z_\ell)\le (cM)^{\max_k \Vert y_k  \Vert_\infty}$ for any $z_\ell \in S_{L,rM}$, there exists $c>0$ such that for any $z_\ell\in S_{L,rM}$,
\begin{align}   
\begin{split} \label{eq:usage of lemma1}
&\sum_{\substack{z\in A\cap S_M\\q_L(z)=z_\ell}}\sum_{y_k\to y'_k\in \Re_0}\lambda^N_k(z-{y'}^N_k-y^N_k)p^N(z-{y'}^N_k-y^N_k,t) \\
&\le \sum_{u} \bar \lambda_u(z_\ell-\bar y'_u+\bar y_u)p^N_L(z_\ell-\bar y'_u+\bar y_u,t) + c \frac{M^{\max_k \Vert y_k \Vert_\infty}}{N^{\nu_1}}.
\end{split}
\end{align}
Lemma \ref{lem:two intensities} (ii) further implies that 
\begin{align}\label{eq:usage of lemma2}
    \sum_{\substack{z\in A\cap S_M\\q_L(z)=z_\ell}}\sum_{y_k\to y'_k\in \Re^c_0}\lambda^{N,-\theta_0}_k(z)p^N(z,t) \le \frac{cM^{\max_k \Vert y_k \Vert_\infty}}{N^{\nu_2}}
\end{align}
Hence by using \eqref{eq:usage of lemma1} and \eqref{eq:usage of lemma2}, we take $\sum_{\substack{z\in A\cap S_M\\q_L(z)=z_\ell}}$ in \eqref{eq:masterN1} to show that for sufficiently small $\rho$ there exists $c>0$ and $\nu_4 \in (0,1)$ such that for any $z_\ell \in S_{L,M}$ 
\begin{align*}
    &\frac{d}{dt}p^N_L(z_\ell,t)\\
    &=\hspace{-0.3cm}\sum_{\substack{z\in A\cap S_M\\q_L(z)=z_\ell}}\hspace{-0.2cm}\left(\sum_{k}\lambda^{N,-\theta_0}_k(z-{y'}^N_k-y^N)p^N(z-y'^N_k+y^N_k,t)-\sum_{k}\lambda^{N,-\theta_0}_k(z)p^N(z,t) \right) \\
    &\le \sum_{u}\bar \lambda_{u}(z_\ell-\bar y_u +\bar y_u)p^N(z_\ell-\bar y'_u + \bar y_u,t)-\sum_{u}\bar \lambda_{u}(z_\ell)p^N(z_\ell,t) + \frac{c}{N^{\nu_4}}.
\end{align*}
Hence as we derived \eqref{eq: ineq for p}, we have the following componentwise inequaility for the column vector $p^N_L(t)=\{p^N_L(z_\ell,t)\}_{z_\ell \in S_{L,M}}$: for some $c>0$ and $\nu_5 \in (0,1)$
\begin{align*}
     \mathcal L^M p^N_L(t) -b^N_2 \le \frac{d}{dt}p^N_L(t)\le \mathcal L^M p^N_L(t) + b^N_2,
\end{align*}
where $b^N_2$ is a column vetor with each entry $\frac{c}{N^{\nu_4}}$.
By combining these inequalities with \eqref{eq: ineq for p}, we have
\begin{align}\label{eq: ineq for pN-p}
   \mathcal L^M (p^N_L(t)-p(t))  - b^N \le    \frac{d}{dt}(p^N_L(t)-p(t)) \le  \mathcal L^M (p^N_L(t)-p(t)) + b^N,
\end{align}
with a vector $b^N$ such that each entry $b^N(i)$ is $\frac{c}{N^{\nu_5}}$ for some $c>0$ and $\nu_5\in (0,1)$.

Then we complete this proof applying Lemma \ref{lem:Gronwall} for \eqref{eq: ineq for pN-p}. 
  Note first that a system of differential equation 
  \begin{align*}
 \begin{cases}
\frac{d}{dt}u(t)=\mathcal L^Mu,\\
u(0)=u_0,
\end{cases}
  \end{align*}
  admits a unique solution $u(t)$ when $\Vert u_0 \Vert_1=1$ because we can regard $\mathcal L^M$ as the transition rate matrix of a continuous time Markov chain defined on $\S_{L,M}$. Hence applying Lemma \ref{lem:Gronwall}  for \eqref{eq: ineq for pN-p}  with $v(t)=p^N_L(t)-p(t)$ and $v(t)=p(t)-p^N_L(t)$ respectively,
 we have
 \begin{align*}
     |p^N_L(t)-p(t)|\le t b^N+t^2 |S_{L,M}|\mathcal L^M_{max} b^N,
 \end{align*}
 where $\mathcal L^M_{max}$ is the maximum entry of $\mathcal L^M$. Each entry of $\mathcal L^M$ is either the reaction intensity $\bar \lambda_u(z_\ell)$ or  finite sum of $\bar \lambda_u(z_\ell)$ at some $z_\ell \in S_{L,M}$. Hence $\mathcal L^M_{max}$ can be bound by $c M^{\max_u \Vert \bar y_u \Vert_1}$ for some $c>0$ because $\max_{u}\max_{z_\ell\in S_{L,M}}\bar \lambda_u(z_\ell) \le c M^{\max_u \Vert \bar y_u \Vert_1}$ for some $c>0$. Furthermore note that $|S_{L,M}| \le M^d$ and each entry of $b^N$ is $\frac{c}{N^{\nu_4}}$. Hence for each $z_\ell \in S_{L,M}$, we have
 \begin{align*}
     |p^N(A\cap S_M,t)-p(A_L\cap S_{L,M},t)|\le \sum_{z_\ell \in S_{L,M}}|p^N_L(z_\ell,t)-p(z_\ell,t)|\le\dfrac{c  M^{c'}\max\{1,t^2\}}{N^{\nu'_0}}
 \end{align*}
 with sufficiently small $\rho$ and some $\nu'_0\in (0,1)$.
\end{proof}

\section{Theorem \ref{thm:main1} with general kinetics}\label{sec:generalization}
Theorem \ref{thm:main1} can hold for a reaction system under general kinetics as long as the scaled reaction intensities \eqref{eq:scaled intensitiy} satisfy the following conditions.
\begin{enumerate}[label=(CD\arabic*)]
\item\label{condi1} 
The scaled reaction intensity for $Z^{N,\gamma}(t)$ is also decomposed as following: for $z$ such that $q_L(z)\in \Z^d_{\ge 0}$ and $Nq_H(z)\in \Z^{r}_{\ge 0}$
\begin{align*}
    \lambda^{N,\gamma}_k(z)=\kappa_k \lambda_{L,k}(q_L(z))\lambda^{N,\gamma}_{H,k}(q_H(z)).
\end{align*}


 \item\label{condi2}  
$\lambda_{L,k}$ grows polynomially: for any $k$, there exist positive constants $c_1$ such that
\begin{align*}
\lambda_{L,k}(z_\ell)\le c_1 \Vert z_\ell \Vert_\infty^{c_2} \quad \text{for any $z_\ell \in \Z^d_{\ge 0}$}.
\end{align*} 

 \item\label{condi3}   The limit $\lim_{N\to \infty}\lambda^{N,\gamma}_{H,k}(z_h)$  exists for each $z_h\in \Re^r_{\ge 0}$ and we denote this limit by $\bar \lambda_{H,k}(z_h)$. Furthermore if $|z-z^0|\le \frac{M}{N}$, then 
  \begin{align*}
      |\bar \lambda_{H,k}(z_h)-s_k| \le \frac{M}{N} \quad \text{and} \quad |\lambda^{N,\gamma}_{H,k}(z_h)-\bar \lambda_{H,k}(z_h)| \le \frac{1}{N},
  \end{align*}
 where $s_k=\lim_{N\to \infty}\lambda^{N,\gamma}_{H,k}(z^0_h)$ with an initial condition $z^0_h \in \Re^{r}_{\ge 0}$,.
\end{enumerate}

\begin{remark}
Conditions \ref{condi1}--\ref{condi3} hold under the stochastic mass action kinetics \eqref{eq:mass}. 
\end{remark}
\begin{remark}
Polynomials, Michaelis–Menten kinetics, hill functions and logarithms satisfy \ref{condi2} and \ref{condi3}.
 \end{remark}

Note that Lemma \ref{lem:number of jumps needed} can be proved by using conditions \ref{condi1}--\ref{condi3} without properties of mass action kinetics. The other lemmas still hold without further modifications in the proofs. Thus for general kinetics satisfying conditions \ref{condi1}--\ref{condi3}, Theorem \eqref{thm:main1} holds.




\section{Examples}\label{sec:examples}

We apply Theorem \ref{thm:main1} for several multiscale biological systems. In the follow examples, the probability density of low order species in $S_L$ can be approximated with explicit forms. 
\subsection{Futile Cycle}
A futile cycle system  \eqref{eq:futile} appears in \cite{daigle2011automated,kuwahara2008efficient} as an example for computing rare event probabilities. In the system, species $S_2$ is transformed to $S_5$ through intermediate species $S_3$, and this transformation is catalyzed with $S_1$. In the opposite way, $S_4$ catalyzes the transform of $S_5$ to $S_2$ with the intermediate form $S_6$. We added synthesis and degradation of the catalysts $S_1$ and $S_6$ to the original model. 
\begin{align}
\begin{split}\label{eq:futile}
    &S_1+S_2\xrightarrow{\kappa_1} S_3, \quad S_3\xrightarrow{\kappa_2} S_1+S_2, \quad S_3\xrightarrow{\kappa_3} S_1+S_5\\
     &S_4+S_5\xrightarrow{\kappa_4} S_6, \quad S_6\xrightarrow{\kappa_5} S_4+S_5, \quad S_6\xrightarrow{\kappa_6} S_4+S_2,\\
     &\emptyset \xrightleftharpoons[\kappa_8]{\kappa_7} S_1, \quad \emptyset \xrightleftharpoons[\kappa_{10}]{\kappa_9} S_6.
     \end{split}
\end{align}

Let $N$ be the scaling parameter. We set the initial copies of the species as $S_2, S_3, S_5$ and $S_6$ have initially high copies and $S_1$ and $S_4$ have initially low copies.
In particular, $X^N_i(0)=N$ for $i=2,5,6$, $X^N_3(0)=2N$, $X^N_1(0)=2$ and $X^N_4(0)=1$. Hence $\S_L=\{S_1,S_4\}$ and $\S_H=\{S_2,S_3,S_5,S_6\}$. We choose the same rate constants as used in \cite{daigle2011automated,kuwahara2008efficient}, and we assume that the scaling parameter $\beta_k=0$ for all the reaction rate constants so that $\kappa_i=0.1$ for $i=3,6$ and $\kappa_i=1$ otherwise.

Under the mass-action kinetics, the initial reaction intensities are \\
$\lambda_1(X^N(0))=\kappa_1 X^N_1(0)X^N_2(0)=\kappa_1 2N$, $\lambda_2(X^N(0))=\kappa_2X^N_3(0)=\kappa_2 2N$, and so on.
Note that $\lambda_k(X^N(0))=\Theta(N)$ for each $y_k\to y'_k \in \Re_0$, so the maximum order of the initial reaction intensity is $\theta_0=1$.
By definition of $\Re_0$ \eqref{eq:decompose R}, the reactions are classified into $\Re_0$ and $\Re_0^c$ where $\Re^c_0$ contains the $7$ and $8$ th reactions, and the other reactions belong to $\Re_0$. 

We consider the scaled process $Z^{N,-\theta_0}(t)$ such that $Z^{N,-\theta_0})_i(t)=N^{-\alpha_i}X^N(N^{-\theta_0}t)$. We also consider the projected system $(\S_L,\C_L, \Re_L,\K_L)$.
By fix all the species $\S_H$ at their initial values, we obtain the parameter $s_k$ for the projected system such that
\begin{align*}
    s_1=\lim_{N\to \infty} \lambda^{N,-\theta_0}_{H,1}(Z^{N,-\theta_0}(0)) = \lim_{N\to \infty} \frac{N^{\beta_1+y_1 \cdot \alpha}}{N^{\theta_0}}\frac{X^N_2(0)}{N}=1,
\end{align*}
and so on. Thus as shown in Section \ref{subsec:projection} the projected system $(\S_L,\C_L, \Re_L,\K_L)$  is defined as
\begin{align}\label{eq:reduced futile}
     S_1 \xrightleftharpoons[2(\kappa_2+\kappa_3)]{\kappa_1}  \emptyset \xrightleftharpoons[\kappa_4]{\kappa_5+\kappa_6} S_4.
\end{align}
Both $S_1$ and $S_4$ in \eqref{eq:reduced futile} follow a simple birth-death process and hence the stationary distribution of $Z$ is a Poisson distribution so that the condition \eqref{condition3:tail} holds. Furthermore, unlikely in the original model \eqref{eq:futile}, probability densities of species $S_1$ and $S_5$ at time $t$ are analytically tractable as it is shown in \cite{jahnke2007solving} that the time evolution of the probabilities for a linear birth-death process is a convolution of Poisson distributions and  multinomial distributions. Thus we can analytically approximate the dynamics of the species in $S_L$ of the original system \eqref{eq:futile}. Figure \ref{fig3}A displays the density function of $S_1$ for $Z^{N,-\theta_0}(10)$ and the reduced system $Z(10)$ with $N=100$. Figure \ref{fig3}B displays the accuracy of the approximation indicating the convergence rate in the scaling parameter $N$ as proved in Theorem \ref{thm:main1}.

\subsection{Yeast Polarization}
In this section, we consider a signal-transduction pathway \eqref{eq:yeast}, which was introduced in \cite{daigle2011automated}. In the system, species $G$, so-called `G-proten', serves an important role in yeast polarization \cite{moore2013yeast}. $G$ goes through a separation-dephosphorylation-rebind cycle, as the 5th,6th and 7th reactions describe in \eqref{eq:yeast}, respectively. And this cycle is activated by ligand-receptor binding. 
\begin{align}
\label{eq:yeast}
\begin{split}
  &\emptyset \xrightleftharpoons[\kappa_2/N]{\kappa_1/N} R, \quad
    L+R\xrightarrow{\kappa_3/N}RL+L, \quad RL\xrightarrow{\kappa_4/N}R,,\\
    &RL+G \xrightarrow{\kappa_5}G_a+G_{bg} \quad G_a\xrightarrow{\kappa_6/N}G_d, \\
    &G_d+G_{bg}\xrightarrow{\kappa_7}G, \quad    \emptyset \xrightarrow{\kappa_8} RL.
    \end{split}
\end{align}

We model this yeast polarization system with a multiscale stochastic mass-action system.
Let $N$ be the scaling parameter. We suppose that the initial copies of ligand $L$, protein $G$ and its subunit $G_{bg}$ are $\Theta(1)$, and other species have the copy numbers of order $N$. More precisely, for a multiscale process $X^N(t)$ associated with \eqref{eq:yeast}, we set $X^N_R(0)=N,X^N_L(0)=2, X^N_{RL}(0)=N, X^N_G(0)=5, X^N_{G_\alpha}(0)=N, X^N_{G_{bg}}(0)=5$ and $X^N_{G_d}(0)=N$. 
As described in \eqref{eq:yeast}, we scale the rate constants (see the caption of Figure \ref{fig3} for the values of $\kappa_i$'s). Then by computing the reaction intensities at $X^N(0)$, we have the maximum order of reaction intensity $\theta_0$ and then we classify the reactions into $\Re_0=\{y_k\to y'_k:k=5,7\}$ and $\Re^c_0$.

We approximate the scaled process $Z^{N,-\theta_0}(t)$ under slow-timescale with the projected system $(\S_L,\C_L,\Re_L,\K_L)$. The parameters $s_k$ defined around \eqref{eq:rate constants for reduced} are
\begin{align*}
    s_5=\lim_{N\to \infty}\lambda^{N,-\theta_0}_{H,5}=\lim_{N\to \infty} N^{-\theta_0+\beta_5+y_5\cdot \alpha} Z^{N,-\theta_0}_{RL}(0)=1,\\
     s_7=\lim_{N\to \infty}\lambda^{N,-\theta_0}_{H,7}=\lim_{N\to \infty} N^{-\theta_0+\beta_7+y_7\cdot \alpha} Z^{N,-\theta_0}_{G_{d}}(0)=1.
\end{align*}
Then the projected system is
\begin{align}\label{eq:reduced yeast}
    G\xrightleftharpoons[\kappa_7]{\kappa_5} G_{bg}.
\end{align}
The stochastic process $Z(t)=(Z_G(t),Z_{G_{bg}}(t))$ associated with \eqref{eq:reduced yeast} admits a finite state space as the total quantity of $G$ and $G_{bg}$ is preserved. Hence \eqref{condition3:tail} hold. Furthermore the probability density function can be analytically derived as $p(t)=\mu e^{-Qt}$ where $Q$ is the transition matrix of $Z$ defined on $\{(z_1,z_2) : z_1+z_2=Z_G(0)+Z_{G_{bg}}(0)=10\}$ and $\mu$ is the initial distribution of $Z$ such that $\mu(5,5)=1$. We show model reduction in Figure \ref{fig3}CD with almost the same rate constants used in \cite{daigle2011automated}. In Figure \ref{fig3}C, letting $N=1000$, we compare the probability densities of $G_{bg}$ of $Z^{N,-\theta_0}$ associated with \eqref{eq:yeast} and its reduced system $Z$ associated with \eqref{eq:reduced yeast} at time $t=10$. Figure \ref{fig3}D furthermore shows the convergence rate of the approximation.

\begin{figure}[!htb]
\centering{
\includegraphics[width=1\textwidth]{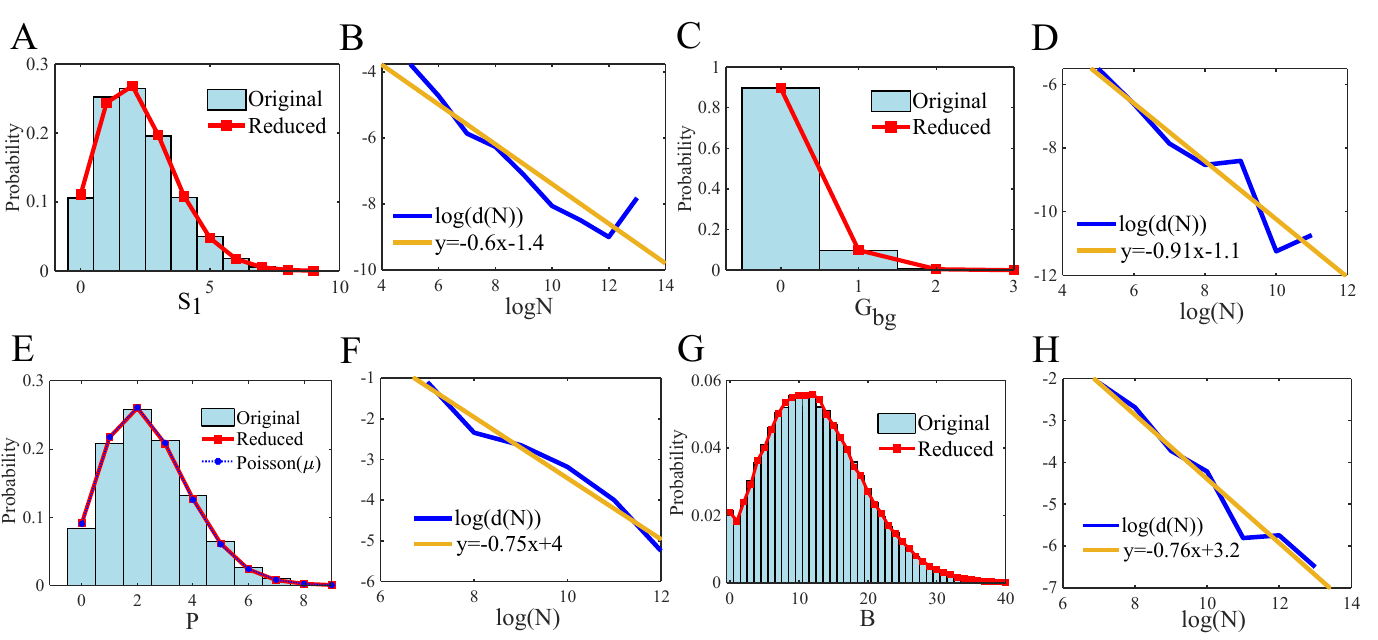}}
\caption{\textbf{AB.} Model reduction for the futile system \eqref{eq:futile}. Parameters are $\kappa_3=0.1$ and $\kappa_6=0.1$, and $\kappa_i=1$ otherwise. The probability density of the original model is calculated with $10^5$ times of Gillespie's simulations \cite{Gillespie77}. \textbf{A.} Comparison of the distributions $P(S^{N,-\theta_0}_1(t)=z_\ell))$ and $P(S_1(t)=z_\ell)$ at $t=100$ with $N=10^4$. \textbf{B.} Convergence $d(N)=|P(Z^{N,-\theta_0}(t)\in A)-P(Z(t) \in A)|$ as $N$ increases for $A=\{S_1=3 \text{ or } 4\}$ at $t=100$.
\textbf{CD.} Model reduction for the yeast polarization system \eqref{eq:yeast}. Parameters are $\kappa_1=3.8,
\kappa_2=40, \kappa_3=42, \kappa_4=10, \kappa_5=0.011, \kappa_6=10, \kappa_7=1$ and $\kappa_8=3.21$. The probability density of the original model is calculated with $10^7$ times of Gillespie's simulations.
\textbf{C.} Comparison of the distributions $P(G_{bg}^{N,-\theta_0}(t)=z_\ell))$ and $P(G_{bg}(t)=z_\ell)$ at $t=10$ with $N=10^3$. \textbf{D.} Convergence of $d(N)=|P(Z^{N,-\theta_0}(t)\in A)-P(Z(t) \in A)|$ as $N$ increases for $A=\{S_1=3 \text{ or } 4\}$ at $t=100$.
\textbf{EF.} Model reduction for the system of p53 response \eqref{eq:p53}. The parameters $\kappa_i$ are 
$1.1, 0.6, 0.3, 0.3, 3.4, 4.5, 4.1, 0.6, 1.1, 0.9, 1.9, 3.2, 3.2$, and $3.0$ for $i=1,2\dots,14$, respectively. Furthermore $c_1=4.7$ and $c_2=1.9$. Each parameter is sampled uniformly in $[0,5]$. $10^7$ Gillespie's simulations are used to estimate the probability densities of the original and reduced models. \textbf{E.}  Comparison of the distributions $P(P^{N,-\theta_0}_1(t)=z_\ell))$ and $P(P(t)=z_\ell)$ at $t=100$ with $N=10^5$. \textbf{F.} Convergence $d(N)=|P(P^{N,-\theta_0}(t)\in A)-P(P(t) \in A)|$ as $N$ increases for $A=\{P=3 \text{ or } 4\}$ at $t=100$. \textbf{GH.} Model reduction for the population model \eqref{eq:Lotka}. The parameters $\kappa_i$ are 
$0.5, 1.7, 3.9, 4.6, 2.7, 1.9, 6.1, 2.4$, and $1.5$ for $i=1,2\dots,9$, respectively. Each parameter is sampled uniformly in $[0,5]$. We sampled $10^5$ trajectories with Gillespie's algorithm to estimate the probability densities of the original and reduced models. \textbf{G.}  Comparison of the distributions $P(B^{N,-\theta_0}(t)=z_\ell))$ and $P(B(t)=z_\ell)$ at $t=150$ with $N=10^5$. \textbf{H.} Convergence $d(N)=|P(B^{N,-\theta_0}(t)\in U)-P(B(t) \in U)|$ as $N$ increases for $U=\{B\ge 10\}$ at $t=150$. The yellow straight lines in \textbf{BDFH} indicate that the convergence rate of $d(N)$ to $0$ is in $\Theta(\frac{1}{N^\nu})$ for some $\nu \in (0,1)$.}\label{fig3}
\end{figure}
\subsection{p53 response to DNA damage}
Network \eqref{eq:p53} describes signaling pathway and negative feedback for activating p53 proteins in damages DNA, which is introduced in \cite{batchelor2008recurrent}.
When DNA is damages, signaling kinases (S) such as ATM and Chk2  convert inactive p53 protein ($P_0$) to active p53 protein (P). The p53 protein also represses itself by the negative feedback loop with the Mdm2 protein (M). Another negative feedback loop between p53, the signal and inhibitor (I) is also present in this system. See a schematic description of this system in \cite[Figure 1B]{batchelor2008recurrent}
\begin{align}
    \begin{split}\label{eq:p53}
        &P_0 + S \xrightarrow{N\kappa_1} P+S, \\
        &P_0+M \xrightarrow{\kappa_2} M, \quad   \ \ \ \ \qquad P+M \xrightarrow{\kappa_3} M, \\
        &P \xrightarrow{\kappa_4} P+M, \quad  \ \ \ \ \ \qquad  P\xrightarrow{\kappa_5} P +I,\\
        &S+M \xrightarrow{\kappa_6 /N}S, \quad  \ \ \qquad S+I \xrightarrow{\kappa_7}I,\\
        &P\xrightleftharpoons[N\kappa_9]{\kappa_8} \emptyset \xrightleftharpoons[\kappa_{11}]{\kappa_{10}} M, \quad  \qquad S \xrightleftharpoons[\kappa_{13}]{\kappa_{12}} \emptyset \xleftarrow{\kappa_{14}}I.
    \end{split}
\end{align}
To match the initial setting used in \cite{batchelor2008recurrent}, we assume that 
the inhibitor has low copies at the beginning. Furthermore we assume that $P_0$ and $P$ have also initially low copies. Precisely, $X^N_{P_0}(0)=5, X^N_{P_0}(0)=0, X^N_{I}(0)=1, X^N_{N}(0)=N$ and $X^N_{S}(0)=5N$. Hence $\S_L=\{P_0,P,I\}$ and $\S_H=\{M,S\}$.
For each state $x=(x_{P_0},x_{P},x_S,x_M,x_I)$, the reaction intensities $\lambda_1$ and $\lambda_7$ contain hill-functions as
\begin{align*}
    \lambda_1(x)=N\kappa_1 x_{P_0}\frac{x_S}{x_S+c_1}, \quad \text{and}\quad   \lambda_7(x)=\kappa_7 x_{S}\frac{x_I}{x_I+c_2},
\end{align*}
for some positive constants $c_1$ and $c_2$. The other intensities follow the mass-action kinetics \eqref{eq:mass}. Under the scaled rate constants shown in \eqref{eq:p53}. Then the maximum order $\theta_0$ of the intensities is $1$ so that $\Re_0$ contains reaction $1, 2, 3, 6, 7, 9, 11$ and $12$. For the process $Z^{N,-\theta_0}(t)$ under slow-timescale, the associated projected network $(\S_L,\C_L,\Re_L,\K_L)$ is
\begin{align*}
    & \quad P_0 \xrightarrow{ \ \quad  s_1\kappa_1   \quad \ } P \\
    &\text{\footnotesize{$s_8 \kappa_8$}}\searrow\hspace{-0.5cm}\nwarrow \text{\footnotesize{$\ s_2\kappa_2$}} \hspace{0.4cm} \swarrow\hspace{-0.2cm} \text{\footnotesize{$s_3\kappa_3$}}\\
    &\hspace{1.4cm} \emptyset
\end{align*}
 where the parameters $s_k$'s are defined as in \eqref{eq:rate constants for reduced}. Especially for the non-mass action intensity $\lambda^N_1(Z^{N,-\theta_0}(0))$ we have
\begin{align*}
   s_1=\lim_{N\to \infty} \lambda^{N,-\theta_0}_{H,1}(q_H(Z^{N,-\theta_0}(0))=N^{-\theta_0+\beta_1 }\frac{N}{N+c_1}=1.
\end{align*}
Note that these intensities under non-mass action kinetics satisfy \ref{condi1}--\ref{condi3}.
Let $Z(t)$ be the stochastic process associated with $(\S_L,\C_L,\Re_L,\K_L)$. The time evolution of the probability density of $Z(t)$ is analytically intractable. However, it can be shown that the probability density converges to a unique stationary distribution, which can be explicitly derived. $(\S_L,\C_L,\Re_L,\K_L)$ satisfies special network structure, so-called \emph{zero deficiency} and \emph{weakly reversible}, and hence its stationary distribution is a product form of Poisson distributions \cite[Theorem 6.1]{AndProdForm}. See Appendix for more details.

Figure \ref{fig3}E displays the probability distribution of the original and reduced models along with Poisson distribution of rate $\mu=2.4$, which is the average of $P$ in $(\S_L,\C_L,\R_L,\K_L)$. A commodity machine was used to simulate the samples in parallel (parfor in Matlab with 6 workers) and took 317 sec for the original model and 2.4 sec for the reduced model. We also show the convergence of the original model in Figure \ref{fig3}E.

\subsection{Three species Lotka-Volterra model with migration}

We consider a multiscale Lotka-Volterra population model \eqref{eq:Lotka} with the scailing parameter $N$. There exists three species in the network where $A$ is the lowest level prey, $B$ is the middle level species, and $C$ is the top level predator.
\begin{align}
\begin{split}\label{eq:Lotka}
    &B\xrightleftharpoons[\kappa_2]{\kappa_1} \emptyset, \quad A\xrightleftharpoons[\kappa_4]{\kappa_3/N} \emptyset, \quad C\xrightleftharpoons[\kappa_6 ]{\kappa_5 } \emptyset,\\
    & A \xrightarrow{\kappa_7/N} 2A, \quad A+B\xrightarrow{\kappa_8/N} 2B, \quad B+C\xrightarrow{\kappa_9} 2C.
    \end{split}
\end{align}
We use non-mass action kinetics for the reactions $A+B\to 2B$ and $B+C\to 2C$ to model `weak hunting' of $B$ and $C$ such that for each $x=(x_A,x_B,x_C)$,
\begin{align*}
    \lambda_8(x)=\frac{\kappa_8}{N} x_A\sqrt{x_B} \quad \text{and} \quad \lambda_9(x)=\kappa_9 \log(x_Bx_C+1).
\end{align*}
The other reaction intensities obey the mass-action kinetics. 
We assume that $\S_L=\{B,C\}$ and $\S_H=\{A\}$ as $X^N_A(0)=N$, $X^N_B(0)=10$ and $X^N_C(0)=10$. Under the scaled rate constants shown in \eqref{eq:Lotka}, the maximum order of the reaction intensities $\theta_0$ is $0$, and every reaction belongs to $\Re_0$. Note that we assumed that slow birth and slow degradation of $A$ so that the associated reaction rate constants for the 3rd and 7th reactions are of order $\frac{1}{N}$. 

For the scaled process $Z^{N,-\theta_0}(t)$, the scaled reaction intensities are decomposed as defined in \eqref{eq:decomp of lam}, especially for each $z=(z_A,z_B,z_C)$
$\lambda^{N,-\theta_0}_{H,8}=N^{\beta_8+y_8 \cdot \alpha}z_A, \lambda_{L,8}=\kappa_8 \sqrt{z_B}$, $\lambda^{N,-\theta_0}_{H,9}=1$ and $\lambda_{L,9}=\kappa_9\log(z_B z_C+1)$. Hence \ref{condi1}--\ref{condi3} hold.

Now we consider the projected system. As the the reaction $A+B\to 2B$ obeys non-mass action kinetics involving $A\in \S_H$, parameter the $s_8$ is especially computed as $s_8=\lim_{N\to \infty}\lambda^N_{H,8}(Z^{N,-\theta_0}(0))=1$. Hence the projected system is
\begin{align}\label{eq:reduced lotka}
     &B\xrightleftharpoons[\kappa_2]{\kappa_1} \emptyset, \quad C\xrightleftharpoons[\kappa_6 ]{\kappa_5 } \emptyset \quad B\xrightarrow{\kappa_8} 2B, \quad B+C\xrightarrow{\kappa_9} 2C.
\end{align}
Let $Z(t)$ be the stochastic process associated with \eqref{eq:reduced lotka}
In Appendix, we show how to use the Foster-Lyapunov criterion \cite{MT-LyaFosterIII} to verify that a stationary distribution $\pi$ of $Z$ exists and how $\pi$ meets the condition \eqref{condition3:tail}. The approximation and convergence rates of species $B$ is shown in Figure \ref{fig3}G and H. 
A commodity machine was used to approximate the probability densities in parallel (parfor in Matlab with 6 workers) and took 174 sec for the original model and 70 sec for the reduced model.

\section{Discussion}\label{sec:discussion}

When a stochastic biochemical reaction system contains species with different orders of abundance, one can model the system using a multiscaling approach. We have shown in this paper that a multiscale stochastic reaction a short-term timescale can be approximated using a reduced stochastic system with a specific error bound.


The scaling regime we considered in this paper is a special case of the scaling under the so-called species balance condition, which was introduced in \cite{KangKurtz2013}. 
More general multiscaling limits of stochastic reactions introduced by Kurtz and others rely on the law of large numbers and relative compactness of probability measures in a metric spaces \cite{anderson2017finite, ball2006asymptotic, KangKurtz2013, Kurtz72}. This general framework covers a wide range of multiscaling limits, but the convergence rate in the general case remains unknown. Instead of the classical methods, here we have used a direct analysis of the Kolmogorov forward equation, and we also use the state space truncation through FSP to exploit the distance between two probability measures more explicitly. 

One of the key steps for the main result was to show that the concentrations of the order $N$ species are confined near the initial concentrations. To do this, we showed that the multiscale model is non-explosive by assuming that the reduced system admits a stationary distribution satisfying a finite moment condition. Indeed, this assumption implies the tightness of the family of multiscale stochastic processes, which in turn implies the relative compactness of the sequence of probability measures. This assumption is also closely related to some technical conditions on stoichiometric coefficients such as the binary or unary conditions assumed in \cite{anderson2017finite, KangKurtz2013}.

We can generalize the rate of the convergence in the main result if all the moments of the stationary distribution $\pi$ in \ref{condition3:tail} are finite. Based on a suggestion by Chaojie Yuan for this case, we used the Burkholder–Davis–Gundy inequality \cite{burkholder1972integral} and were able  to prove inductively the alternative result that $E(J(t)^m)\le c t^m$ in Lemma \ref{lem:mean of the jump number}. By combining this result with the other lemmas with slight modifications, it follows that
 \begin{align*}
 |p^N(A,\R^r_{\ge 0},t) - p(A,t)| \le  \frac{c \max\{1,t^m\}}{N^\nu} \quad \text{for any $\nu\in (0,1)$}.
 \end{align*}

The main result in this paper can shed light on the applicability of multiscaling model approximations for the analysis of stochastic reaction systems.  In the analysis that we proposed for the multiscale model reduction, the convergence of the probability measure has been exhibited more explicitly than in the existing literature.
The main result can also strengthen the applicability of this theoretical framework to practical problems in systems biology such as rational circuit design and the study of absolute robustness \cite{kim2020absolutely}.

\section*{Acknowledgment}

We would like to thank Eduardo Sontag, Carsten Wiuf, Chuang Xu and Linard Hoessly for key suggestions regarding this work, as well as Chaojie Yuan for an alternative proof of an important lemma.

\section*{Appendix A: Table of symbols}\label{app:table}

\begin{center}
\vspace{0.2cm}
\begin{tabular}{|c|c|}
Symbol & Meaning \\
\hline
     $\S, \C, \Re$ and $\K$ & Set of species, complexes, reaction and reaction intensities, respectively\\
     $X_i(t)$ & The count of $i$ th species  at time $t$ \\
      $X^N(t)$ & A multiscale stochastic model associated with a reaction network \\
      $Z^{N,\gamma}(t)$ & A scaled process\\
      $d$ & Number of species of low initial copies\\
        $r$ & Number of species of high initial copies\\
      $\Z^d_{\ge 0}$ & $\{x \in \Z^d : x_i \ge 0 \text{ for each $i$} \}$\\
      $\Re^r_{\ge 0}$ &  $\{z \in \R^r : z_i \ge 0 \text{ for each $i$} \}$\\
      $p^N(\cdot,t)$ & Probability density function of $Z^{N,-\theta_0}(t)$\\
      $p(\cdot,t)$ & Probability density function of $Z(t)$\\
      $n^{(k)}$ & $n(n-1)\cdots (n-k+1)\mathbbm{1}_{n\ge k}$ for non-negative integers $n$ and $k$\\
      $u^{(v)}$ & $\prod_{i=1}^d u_i^{(v_i)}$ for $u,v \in \Z^d_{\ge 0}$\\
      $u^{v}$ & $\prod_{i=1}^d u_i^{v_i}$ for $u,v \in \R^d_{\ge 0}$\\
      $\S_L$ & Set of species such that $X_i(0)=\Theta(1)$\\
      $\S_H$ & Set of species such that $X_i(0)=\Theta(N)$\\
      $\lambda_k$ & Reaction intensity of $X^N$ associated with the $k$ th reaction.\\
      $\lambda^{N,\gamma}_k$ & Reaction intensity of $Z^{N,\gamma}$ associated with the $k$ th reaction \eqref{eq:scaled intensitiy}.\\
      $\lambda_{L,k}$, $\lambda^{N,\gamma}_{H,k}$ & Decomposition of $\lambda^{N,\gamma}_k$ \eqref{eq:decomp of lam}.\\
      ${y'}^N-y^N$ &  The scaled reaction vector with the $i$ th component $\frac{y'_{k,i}-y_{k,i}}{N^{\alpha_i}}$
\end{tabular}

\end{center}

\section*{Appendix B: Proof of Lemmas in Section \ref{sec:lemma}}\label{app1}

\begin{proof}[\textbf{Proof of Lemma \ref{lem:two intensities}}]
\blue{Suppose first that $y_k\to y'_k \in \Re_0$. As defined in \eqref{eq:decomp of lam}, we use the decomposition of $\lambda^{N,-\theta_0}_k(z)=\kappa_k \lambda_{L,k}(z_\ell)\lambda^{N,-\theta_0}_{H,k}(z_h)$.
As we discussed in Remark \ref{rmk:alternative K^N}, the intensities of $Z$ can be calculated as $\bar \lambda_u(z_\ell)=\bar \kappa_u (z_\ell)^{(\bar y_u)}=\bar \kappa_u \lambda_{L,k}(z_\ell)$ if $q_L(y_k)=\bar y_u$. Hence by definition of $S_M$ in \eqref{eq:compact sets},
\begin{align}
\begin{split}\label{eq:between lambdaN-lambda}
    \left | \lambda^{N,-\theta_0}_{k}(z)-\frac{s_k \kappa_k}{\bar \kappa_u}\bar \lambda_{u}(z_\ell)   \right |
    &=\kappa_k \lambda_{L,k}(z_\ell) \left | \lambda^{N,-\theta_0}_{H,k}(z_h)-s_k  \right | \\
    &\le \kappa_k \lambda_{L,k}\frac{M}{N} =\kappa_k \lambda_{L,k}\frac{1}{N^{1-\rho}},
    \end{split}
\end{align}
 for each $z\in S_M$. Therefore (i) follows with $\nu_1=1-\rho$.}

\blue{To show (ii), we recall that $-\theta_0+\beta_k+y_k \cdot \alpha<0$ for each $y_k\to y_k'\in \Re^c_0$, which implies that for $Z^{N,-\theta_0}(0)=z^0$
\begin{align*}
    s_k=\lim_{N\to \infty}N^{-\theta_0+\beta_k+y_k \cdot \alpha} \prod_{i=d+1}^{d+r}z^0_i\left(z^0_i-\frac{1}{N}\right )\cdots \left(z^0_i-\frac{y_{i}-1}{N}\right )=0.
\end{align*}
Thus (ii) follows with  $\nu_2=1-\rho( \max_k\Vert q_L(y_k)\Vert_\infty +1)$ by choosing sufficiently small $\rho\in (0,1)$ for $M=N^\rho$ because for $z\in S_M$
\begin{align*}
    \lambda^{N,-\theta_0}_k(z) \le \kappa_k\lambda_{L,k(z_\ell)}\frac{M}{N}\le  \kappa_k M^{\Vert q_L(y_k)\Vert_\infty } \frac{M}{N}=\kappa_k \frac{1}{N^{\nu_2}}.
\end{align*}
} 

\blue{Lastly, to show that (iii) we note that $\dsum_{y\to y' \in \Re_0}s_k\kappa_k \lambda_{L,k}(z)=\sum_u \kappa_u \bar \lambda_u(z)$. 
Then by (i) and (ii) there exists $c>0$ such that for any $z \in S_M$
\begin{align*}
    \sum_k\lambda^{N,-\theta_0}_k(z)&=\sum_{y_k\to y_k' \in \Re_0} \lambda^{N,-\theta_0}_k(z)+\sum_{y_k\to y_k' \in \Re^c_0} \lambda^{N,-\theta_0}_k(z)\\
    &\le \sum_{y_k\to y_k' \in \Re_0} \left | \lambda^{N,-\theta_0}_{k}(z)-\frac{s_k \kappa_k}{\bar \kappa_u}\bar \lambda_{u}(z_\ell)   \right | +  \sum_{y_k\to y_k' \in \Re_0} \frac{s_k \kappa_k}{\bar \kappa_u}\bar \lambda_{u}(z_\ell)\\
    & \ \ \ +\sum_{y_k\to y_k' \in \Re^c_0} \lambda^{N,-\theta_0}_k(z)\\
    &\le \sum_{y_k\to y_k' \in \Re_0}\frac{\kappa_k\lambda_{L,k}(z_\ell)}{N^{\nu_1}}+\sum_{y_k\to y_k' \in \Re_0}\frac{s_k\kappa_k}{\bar \kappa_u}\bar \lambda_u(z_\ell)+\frac{c}{N^{\nu_2}} \\
    &\le \frac{c}{N^{\nu_1}}\sum_u \bar \lambda_u(z_\ell) + \sum_u \bar \lambda_u(z_\ell)+\frac{c}{N^{\nu_2}}= \left ( 1+\frac{c}{N^{\nu_1}}\right)\sum_u \bar \lambda_u(z_\ell)+\frac{c}{N^{\nu_2}}.
\end{align*}
Then by \eqref{eq:min bar lambda}, the upper bound of $\sum_k \lambda^{N,-\theta_0}_k(z)$ follows. The lower bound also holds as
\begin{align*}
     \sum_k\lambda^{N,-\theta_0}_k(z) &\ge \sum_{y_k\to y_k' \in \Re_0} \lambda^{N,-\theta_0}_k(z) \\
      &\ge \sum_{y_k\to y_k' \in \Re_0} \left ( \lambda^{N,-\theta_0}_{k}(z)-\frac{s_k \kappa_k}{\bar \kappa_u}\bar \lambda_{u}(z_\ell)   \right ) + \sum_{y_k\to y_k' \in \Re_0} \frac{s_k \kappa_k}{\bar \kappa_u}\bar \lambda_{u}(z_\ell)\\
      &\ge -\sum_{y_k\to y_k' \in \Re_0}\frac{\kappa_k\lambda_{L,k}(z_\ell)}{N^{\nu_1}} + \sum_{y_k\to y_k' \in \Re_0}\frac{s_k \kappa_k}{\bar \kappa_u}\bar \lambda_u(z_\ell)\\
    &\ge  \left ( -\frac{c}{N^{\nu_1}}+1\right)\sum_u \bar \lambda_u(z_\ell)
\end{align*}
}
\end{proof}

\begin{proof}[\textbf{Proof of Lemma \ref{lem:mean of the jump number}}]
 By the random-time representation \eqref{eq:kurtz rep}, \\
 $J(t)=\dsum_{\bar y_u\to \bar y'_u \in \Re_L}Y_u\left ( \dint_0^t \bar \lambda_u(Z(s)) ds\right )$, where $Y_u$ are independent unit Poisson random variables. Note that  for each $u$,
\begin{align*}
Y_u\left ( \int_0^t \bar \lambda_u(Z(s)) ds\right )-\int_0^t \bar \lambda_u(Z(s)) ds 
\end{align*} is a Martingale process \cite{AndKurtz2011}. We denote by $M_u(t)$ this Martingale. Then  the quadratic variation of $M_u(t)$ is $[M_u](t)=Y_u\left ( \int_0^t \bar \lambda_u(Z(s)) ds\right )$.
Since $M^2_u(t)-[M_u](t)$ is a martingale \cite{AndKurtz2011}, we have that by using Jansen's inequality
\begin{align}
E\left (Y_u\left ( \int_0^t \bar \lambda_u(Z(s)) ds\right )^2\right ) &\le 2E(M_u(t)^{2})+  2E\left( \left( \int_0^t \bar \lambda_u(Z(s)) ds\right)^2 \right ) \notag \\
&\le 2E([M_u](t))+2tE\left(\int_0^t \bar \lambda_u(Z(s))^2 ds\right)  \label{eq:J^2}
\end{align}

For a fixed initial value $Z(0)=z^0$, there exists $c>0$ such that $P(Z(s)=z)\le c\pi(z)$ for any $z$ because
\begin{align*}
   \pi(z)=\sum_{x}P(Z(s)=z \ | \ Z(0)=x)\pi(x) \ge P(Z(s)=z)\pi(z^0).
\end{align*}
This implies that by \ref{condition3:tail}, we have 
\begin{align*}
E([M_u](t))&=E\left( Y_u\left ( \int_0^t \bar \lambda_u(Z(s)) \right ) \right ) = \int_0^t \sum_{x}\sum_{u}\bar \lambda_u(x) P(Z(s)=x) ds\\
&\le c_1\sum_{x}\bar \lambda_u(x) \pi(x)t \le c'_1 t,
\end{align*}
with some positive constants $c_1$ and $c'_1$.
Similarly, 
\begin{align*}
E\left(\int_0^t \bar \lambda_u(Z(s))^2 ds\right)  &= \int_0^t\sum_{x}\bar \lambda_u(x)^2 P(X(s)=x)ds \\
&\le c'_2\int_0^t\sum_{x}\bar \lambda_u(x)^2 \pi(x)ds \le c_2 t,
\end{align*}
with some positive constants $c_2$ and $c'_2$. Applying these to \eqref{eq:J^2}, it follows that
\begin{align*}
E\left (Y_u\left ( \int_0^t \bar \lambda_u(Z(s)) ds\right )^2\right ) \le c'\max\{1,t^2\},
\end{align*}
with some positive constant $c'$.

 Finally the result follows since  by Jansen's inequality we have that there exists a positive constant $c$ such that
\begin{align*}
E(J(t)^2)\le |\Re_L|\sum_{u} E\left (\left (Y_u\left ( \int_0^t \bar \lambda_u(Z(s)) ds\right )\right )^2 \right ) \le c\max\{1,t^2\},
\end{align*}
\end{proof}

\begin{proof}[\textbf{Proof of Lemma \ref{lem:number of jumps needed}}]
It is suffice to show that at least $cM$ transitions are required for $Z^{N,-\theta_0}(t)$ to escape $S_M$. Let $Z^{N,-\theta_0}(0)=z^0$ be the initial state. We first show that $\{z_h : |z_h-z^0_h| < c\frac{M}{N} \}\subseteq S_{H,M}$ for some $c>0$. Note that if $y_k\to y_k' \in \Re_0$, then $s_k=(z^0_h)^{q_H(y_k)}$ and  $-\theta_0+\beta_k+y_k\cdot \alpha=0$. Then there exist $c'>0$ and $c''>0$ such that for any $y_k\to y_k' \in \Re_0$ if $z_h=z^0_h+\eta$ with $|\eta|\le c'\frac{M}{N}$, then 
\begin{align*}
    &|z_h^{q_H(y_k)}-s_k|=|(z^0_h+\eta)^{q_H(y_k)}-s_k| \le c''|\eta|\le \frac{M}{2N}, \quad \text{and}\\
    &\left |z_h^{q_H(y_k)}-\lambda^{N,-\theta_0}_{H,k}(z_h) \right | =\left |\prod_{i=1}^r z_{h,i}^{y_{k,d+i}}-\prod_{i=1}^{r} z_{h,i}\left( z_{h,i}-\frac{1}{N}\right)\cdots\left( z_{h,i}-\frac{y_{k,d+i}-1}{N}\right) \right |   \\
    & \hspace{3.5cm}\le \frac{c''}{N}.
\end{align*}
This implies that for $y_k\to y'_k\in \Re_0$ if $|z_h-z^0_h| \le c'\frac{M}{N}$ then
\begin{align*}
    \left |\lambda^{N,-\theta_0}_{H,k}(z_h)-s_k\right | \le \left |z_h^{q_H(y_k)}-s_k\right |+
    \left |z_h^{q_H(y_k)}-\lambda^{N,-\theta_0}_{H,k}(z_h)\right | \le \frac{M}{N},
\end{align*}
for sufficiently large $N$.
Hence $\{z_h : |z_h-z^0_h| < c\frac{M}{N} \}\subseteq S_{H,M}$.
Furthermore $\{z_\ell : |z_\ell - z^0_\ell| \le c'''M\} \subseteq S_{L,M}$ for some $c'''>0$ when $N$ is sufficiently large. Therefore
\begin{align*}
    \{z_\ell : |z_\ell - z^0_\ell| \le c'M\}\times 
    \left \{z_h : |z_h-z^0_h| < c'''\frac{M}{N} \right \} \subset S_M.
\end{align*}
Since the transition size for each entry $Z^{N,-\theta_0}_i$ of the scaled process transitions by a single reaction is $\Theta(N^{-\alpha_i})$, this implies that $Z^{N,-\theta_0}(t)$ needs at least $\lfloor cM \rfloor$ transitions for some $c>0$ to escape $S_M$. 
\end{proof}

\begin{proof}[\textbf{Proof of Lemma \ref{lem:Gronwall}}]
$\frac{d}{dt}v(t)\le Av(t)+b$  can be written as
\begin{align*}
v(t)\le tb+ \int_0^t Av(s) ds
\end{align*}
allowing that the inequality holds component-wisely. Then by the multivariable Gronwall's inequality \cite{MultiGronwall},
\begin{align*}
v(t)\le tb+t\int_s^t V(t,s)Abds,
\end{align*}
where $V(t,s)$ satisfies 
\begin{align*}
V(t,s)=I+\int_s^t AV(x,s) dx,
\end{align*}
allowing the equality holds component-wisely.
By taking time-derivative, we notice that $i$ th column of $V(t,s)$ is a solution $u$ of the system of differential equation \eqref{eq:system ode} with $u_i(s)= 1$ and $u_j(s)=0$ if $j \neq i$. 
Therefore by the hypothesis in the statement, each column of $V(t,s)$ is a positive vector and the sum of the entries is equal to $1$ for $t\ge s$. Hence it implies that $V(t,s)Ab\le \bar Ab$ for $t\ge s$. Thus for each $t$, the result follows.
\end{proof}

\end{document}